\newtheorem{dummy}{dummy}[section]
\newtheorem{lemma}[dummy]{Lemma}
\newtheorem{proposition}[dummy]{Proposition}
\theoremstyle{definition}
\newtheorem{definition}[dummy]{Definition}
\author{Oleg Podkopaev}
\title{The Equivalence of Grayson and Friedlander-Suslin Spectral Sequences }
\begin{document}
%	
%	THE BODY OF YOUR THESIS STARTS HERE
%

%%%%%%%%%%%%%%%%%%%%%%
% Some initial stuff %
%%%%%%%%%%%%%%%%%%%%%%

\frontmatter		% Preliminary pages start here.

\maketitle		% Produces the title page.

\copyrightpage		% Creates the copyright page.

\abstract		% Abstract.

This thesis establishes the equivalence of Grayson (\cite{Gr95}) and Friedlander-Suslin (\cite{FS}) spectral sequences, that was previously only
known (\cite{Su01}) for the respective $E_2$-terms.  We develop the necessary techniques regarding $K_0$-presheaves of spectra, building on the 
work of M. Walker (\cite{Wa97}) and construct certain filtrations on the $K$-theory presheaf of spectra that we use as intermediate steps in 
obtaining the equivalence of the filtrations used in respectively \cite{Gr95} and \cite{FS}.

\acknowledgements	% Acknowledgements (optional).

I would like to thank my advisor, Andrei Suslin, for posing the problem and for his guidance, 
support and generosity throughout my period of doctoral study.
I would also like to thank Paul Goerss and Boris Tsygan who agreed to be on my defense comittee.  
At last, I would like to thank the faculty, staff and fellow 
students who made my stay at Northwestern really enjoyable.

\preface		% Preface (optional).
In \cite{Gr95} Grayson defined certain complexes that (after "globalizing"
them to obtain complexes of sheaves $\mathbb{Z}^{Gr}(i)$) seemed to
be a plausible candidate for "motivic complexes" of Beilinson and
Lichtenbaum.  The main reason behind that was the spectral sequence

$$E^{pq}_2 = \mathbb{H}^{p-q}(X, \mathbb{Z}^{Gr}(-q)) \Rightarrow K_{-p-q}(X)$$

for a smooth scheme $X$ of finite type over a field.

Independently, Suslin and Voevodsky in a series of papers (\cite{FrVo}, \cite{SV96}, \cite{SV98},
\cite{MVW}) defined complexes $\mathbb{Z}(i)$ that turned out to satisfy
most of Beilinson-Lichtenbaum conjectures.  In particular, using
Bloch and Lichtenbaum's work \cite{BL} as a starting
point, Friedlander and Suslin (\cite{FS}) constructed a
spectral sequence

$$E^{pq}_2 = \mathbb{H}^{p-q}(X, \mathbb{Z}(-q)) \Rightarrow K_{-p-q}(X)$$

for a smooth scheme $X$ of finite type over a field and proved
several of its properties.

Later, Suslin (\cite{Su01}) showed that Grayson's motivic complexes
$\mathbb{Z}(i)$ are quasiisomorphic to Suslin-Voevodsky motivic
complexes $\mathbb{Z}(i)$ and, consequently, the $E_2$-terms of the
respective spectral sequences are isomorphic.  This result left open
the question whether the spectral sequences themselves are
isomorphic.

In the present paper we answer this question affirmatively.  The
importance of our result is in that it shows that
\cite{FS} does not depend on the main result of
unpublished work \cite{BL}.

In order to establish the isomorphism of the above spectral
sequences we compare the filtrations

$$\mathcal{K}(X \times \Delta^{\ast})=\mathcal{W}^0(X \times \Delta^{\ast}) \leftarrow \mathcal{W}^1(X \times \Delta^{\ast}) \leftarrow \dots$$

and

$$\mathcal{K}(X \times \Delta^{\ast})=\mathcal{K}^0(X \times \Delta^{\ast}) \leftarrow \mathcal{K}^1(X \times \Delta^{\ast}) \leftarrow \dots$$

considered in, respectively, \cite{Gr95} and \cite{FS}.
Namely, we consider the presheaf of simplicial spectra on the big
Nisnevich site of the base field given by

$$U \rightarrow \mathcal{K}(U \times \Delta^{\ast}),$$

and introduce the "presheafified" versions of the filtrations of
\cite{Gr95} and \cite{FS}.  Our main result is that there is
a morphism of filtrations $$\mathcal{W}^{\ast} \rightarrow
\mathcal{K}^{\ast}$$ that is a termwise weak equivalence of
presheaves of simplicial spectra\footnote{that is, a stalkwise weak
equivalence, see \cite{Jar87} for the corresponding definitions
concerning presheaves of spectra.}.  The isomorphism of the spectral
sequences easily follows.

We would like now to describe the contents of the paper by section.
The purpose of Sections 1.1 -1.3 is to assemble the notions used in our definitions and
arguments and define the filtrations we want to compare.  For reader's convenience we give a brief account of
Jardine's homotopy theory of (bi)simplicial and presheaves of
spectra on a Grothendieck site and extend his definitions to the
case of presheaves of simplicial spectra.  We also recall the main
definitions concerning Waldhausen categories \cite{Wa85} and
multidimensional cubes of Waldhausen categories.  Finally, we extend
M. Walker's \cite{Wa97} definition of abelian $K_0$-presheaves to the
case of presheaves of spectra and prove a version of an important lemma of Voevodsky
(Cor. 5.11 \cite{Vo99}) in our setting.

In Section 1.4 we construct the filtrations we want to
compare.  In order to do this we need to define several $K$-theory
spectra\footnote{that are modifications of Walker's bivariant
$K$-theory} and prove some of their properties.  In order to
establish weak equivalences between several $K$-theory spectra we
show that the natural functors between the underlying categories of
complexes of sheaves induce equivalences of derived categories, and
use Theorem 1.9.8 of \cite{TT88}.

Chapter 2 contains the proof of our main result.  We define
several intermediate filtrations on the presheaf $\mathbf{K}(\ast
\times \Delta^{\ast})$ and construct the weak equivalences in
question as compositions of weak equivalences of consecutive
intermediate filtrations\footnote{We note that the first
intermediate filtration is a version of the one considered in
\cite{Wa97}}.

\tableofcontents	% Table of Contents will be automatically
			% generated and placed here.

%\listoftables		% List of Tables and List of Figures will be placed
%\listoffigures		% here, if applicable (optional).

\mainmatter             % Actual text starts here.

%%%%%%%%%%%%%%%%%%%%%%%%%%%
% Actual text starts here %
%%%%%%%%%%%%%%%%%%%%%%%%%%%

% If there is an introduction it must be the first chapter

\chapter{Preliminaries}	% The first chapter.

\section{Homotopy theory of presheaves of spectra and presheaves of simplicial spectra}
\subsection{Presheaves of spectra}
By a spectrum $X$ we mean a sequence $(X^n, x_n)$ of pointed
simplicial sets together with pointed morphisms $\sigma^n:\Sigma X^n
\rightarrow X^{n+1}$ of simplicial sets.  A morphism $X \rightarrow
Y$ of spectra is a sequence of pointed morphisms $X^n \rightarrow
Y^n$ of simplicial sets, that respect the morphisms $\sigma^n$. The
stable homotopy groups of a spectrum $X$ are defined as $\pi_i^s (X)
= \text{colim} \pi_{i+n}(X^n, x_n)$.  In the present work we will be
using the stable model structure of \cite{BF} on the category of spectra,
in particular by a weak equivalence of spectra we mean a stable weak
equivalence, i.e. a morphism that induces isomorphisms on the stable
homotopy groups.

Let $\mathcal{C}$ be a Grothendieck site with enough points.

\begin{definition}A presheaf of spectra on $\mathcal{C}$ is a contravariant
functor from $\mathcal{C}$ to the category of spectra.
\end{definition}
We recall here the model structure on the category of presheaves of
spectra on $\mathcal{C}$, introduced in \cite{Jar97}. First define the
$i$-th sheaf of homotopy groups of a presheaf $\mathcal{F}$ of
spectra as the abelian sheaf $\pi^s_i\mathcal{F}$ on $\mathcal {C}$
associated with the presheaf:

$$U \rightarrow \pi^s_i(\mathcal{F}(U))$$

Now, a  cofibration is a morphism $\mathcal{F} \rightarrow
\mathcal{F}^{\prime}$ that is a sectionwise cofibration of spectra
in the sense of \cite{BF}.

A weak equivalence is a "stable weak equivalence", that is, a
morphism $\mathcal{F} \rightarrow \mathcal{F}^{\prime}$ such that
the induced morphisms of abelian sheaves: $$\pi_i^s\mathcal{F}
\rightarrow \pi_i^s\mathcal{F}^{\prime}$$ are isomorphisms of
sheaves for all $i$.

The fibrations are the morphisms that have the right lifting
property with respect to all the morphisms that are cofibrations and
weak equivalences.

\subsection{Presheaves of simplicial spectra} We first fix
certain definitions definitions regarding simplicial spectra (see
[Jardine]).  By a simplicial spectrum we mean a sequence $X =
(X^n),$ $n\geq 0$ of pointed bisimplicial sets together with
morphisms of bisimplicial sets
$$\sigma^n: X^n \wedge S^1 \rightarrow X^{n+1},$$

where $S^1$ is the standard simplicial circle regarded as a
bisimplicial set constant in one variable.

With a simplicial spectrum $X$ we associate the following two
spectra:

\begin{itemize}

\item the diagonal $DX$ defined via $(DX)_n = D(X^n)\footnote{It is indeed a spectrum because there is a canonical isomorphism:

$$D(X^n \wedge S^1) \simeq D(X^n) \wedge S^1.$$}$

\item the $n$-th vertical spectrum defined via $v_nX = ((v_nX)^k =
X^k_{n,\ast}).$

\end{itemize}

The second construction gives rise to the series of simplicial
abelian groups
$$(\pi_{i}^{vert}X)_{\ast} = \pi_i^s(v_{\ast}X),$$

that are involved in the "simplicial spectra version" of
Bousfield-Friedlander spectral sequence, due to Jardine (\cite{Jar97}):

\begin{proposition} (Jardine):  Let $X$ be a simplicial spectrum. Then there is a
spectral sequence:

$$E_2^{p,q} = \pi_p((\pi_{i}^{vert}X)_{\ast}) \Rightarrow \pi_{p+q}DX$$

\end{proposition}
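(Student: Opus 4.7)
The plan is to realize the spectral sequence as the one associated with a filtration of the spectrum $DX$, obtained from the simplicial skeleton filtration of $X$ in the simplicial (not spectrum) direction. First I would introduce, for each $n \geq 0$, the sub-simplicial-spectrum $\mathrm{sk}_n X \subset X$ whose underlying simplicial spectrum is the $n$-skeleton of $X$; concretely, $v_k(\mathrm{sk}_n X)$ is the sub-spectrum generated by the images of the degeneracy operators from $v_j X$ for $j \leq n$. Applying the diagonal functor $D$ then yields an increasing filtration
$$D(\mathrm{sk}_0 X) \hookrightarrow D(\mathrm{sk}_1 X) \hookrightarrow \dots \hookrightarrow DX,$$
whose (sectionwise) colimit is $DX$.

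Next I would identify the successive cofibers. Standard skeletal/latching-object arguments for simplicial objects in a pointed model category (applied sectionwise to $X^n$, i.e.\ to each bisimplicial set in the spectrum) show that the cofiber of $D(\mathrm{sk}_{n-1} X) \to D(\mathrm{sk}_n X)$ is stably equivalent to $\Sigma^n (v_n X / L_n X)$, where $L_n X$ is the $n$-th latching spectrum. This produces the usual exact couple
$$\pi_{p+q}^s D(\mathrm{sk}_{p-1} X) \to \pi_{p+q}^s D(\mathrm{sk}_p X) \to \pi_{p+q}^s (\Sigma^p (v_p X / L_p X)) \to \pi_{p+q-1}^s D(\mathrm{sk}_{p-1} X),$$
and thus a spectral sequence with $E_1^{p,q} = \pi_{q}^s(v_p X / L_p X)$.

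To recognise the $E_2$-term, I would observe that $\pi_{q}^s(v_p X / L_p X)$ is precisely the group of normalised $p$-chains of the simplicial abelian group $(\pi_q^{\mathrm{vert}} X)_{\ast}$, and that the $d_1$ differential coincides under this identification with the alternating-sum differential of the normalised chain complex. By the Dold--Kan correspondence, the homology of this complex in degree $p$ is $\pi_p((\pi_q^{\mathrm{vert}} X)_{\ast})$, yielding the claimed $E_2$-term.

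Finally I would verify convergence by the usual argument: for each fixed total degree $n = p+q$, the stable homotopy groups $\pi_n^s D(\mathrm{sk}_p X)$ stabilise as $p \to \infty$ (since in any single spectrum degree $k$, the simplicial direction of the bisimplicial set $X^k$ has its $n$-homotopy controlled by a finite skeleton, and this behaviour is compatible with the stabilisation maps), giving strong convergence. The main obstacle I anticipate is the cofiber identification in Step 2, which requires care because it must be proved in the stable homotopy category of spectra (not merely sectionwise in pointed simplicial sets) and must be checked to be compatible with the spectrum structure maps $\sigma^k: X^k \wedge S^1 \to X^{k+1}$; this is where one genuinely uses the Bousfield--Friedlander stable model structure recalled above.
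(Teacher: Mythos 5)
The paper offers no proof of this proposition at all: it is quoted as a known result of Jardine (\cite{Jar97}), so there is nothing internal to compare your argument against. What you have written is, in outline, the standard construction that Jardine uses — filter $DX$ by the diagonals of the external skeleta, identify the cofibers of the filtration with $\Sigma^p(v_pX/L_pX)$, and recognise the $E_1$-page as the normalised chain complex of the simplicial abelian group $(\pi_q^{\mathrm{vert}}X)_\ast$ via Dold--Kan. Two points deserve more care than you give them. First, the identification $\pi_q^s(v_pX/L_pX)\cong N_p(\pi_q^{\mathrm{vert}}X)$ is not purely formal: it uses that the latching spectrum $L_pX$ splits off $v_pX$ stably (the degeneracies admit retractions, and in spectra a split cofibration sequence splits on homotopy groups). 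This is exactly the step that fails for bisimplicial \emph{sets} without a $\pi_\ast$-Kan condition, and it is the real place where the stable setting is used — more so than the compatibility with the structure maps $\sigma^k$ that you single out as the main obstacle.

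Second, your convergence argument overstates what is true. The claim that $\pi_n^sD(\mathrm{sk}_pX)$ stabilises at some finite $p$ for each fixed $n$ relies on a bounded-below (connectivity) hypothesis on the spectra $v_pX/L_pX$; a general spectrum has homotopy in arbitrarily negative degrees, and then $\pi_n^s$ of the cofibers $\Sigma^{p+1}(v_{p+1}X/L_{p+1}X)$ need not vanish for large $p$. What one gets unconditionally is that the filtration is bounded below ($\mathrm{sk}_{-1}X=\ast$) and exhaustive, and that stable homotopy commutes with the filtered colimit, so $\mathrm{colim}_p\pi_n^sD(\mathrm{sk}_pX)=\pi_n^sDX$; this yields conditional convergence in Boardman's sense, with strong convergence under an additional hypothesis (e.g.\ uniform connectivity, or vanishing of $RE_\infty$). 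That level of convergence is all the proposition asserts and all the paper uses (its application is to the case where the entire $E_2$-page vanishes), so your proof is salvageable, but as written the last step is not correct for an arbitrary simplicial spectrum.
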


We end this section with the following two definitions:

\begin{definition}A presheaf of simplicial spectra on a site $\mathcal{C}$ is a
contravariant functor from $\mathcal{C}$ to the category of
simplicial spectra.
\end{definition}

\begin{definition}A morphism $\mathcal{F} \rightarrow
\mathcal{F}^{\prime}$ of presheaves of simplicial spectra is a weak
equivalence of presheaves of simplicial spectra if it induces a weak
equivalence of respective diagonal presheaves.
\end{definition}

\subsection{The "singular complex" construction}

Let $\mathcal{S}m(k)$ be the site of schemes over $k$ with Nisnevich
(or Zariski) topology.

Let $\Delta^{\ast}$ be the standard cosimplicial scheme.  If
$\mathcal{F}$ is a presheaf of spectra, we denote
$C_{\ast}\mathcal{F}$ the following presheaf of simplicial spectra:

$$C_{\ast}\mathcal{F}(U) = \mathcal{F}(U \times \Delta^{\ast}).$$

We now prove a useful criterion of weak contractibility of
$C_{\ast}\mathcal{F}$.

\begin{lemma}Let $\mathcal{F}$ be a presheaf of spectra. Suppose that for
every scheme $U/k$ there is a morphism $H_U: \mathcal{F}(U)
\rightarrow \mathcal{F}(U \times \mathbb{A}^1)$ satisfying the
following properties:

(1)  $H$ is natural in $U$, meaning that if $V \rightarrow U$ is a
morphism of $k$-schemes, then the square

$$
  \begin{CD}
      \mathcal{F}(V)          @>>>          \mathcal{F}(V \times \mathbb{A}^1)     \\
   @AAA                        @AAA  \\
      \mathcal{F}(U)          @>>>          \mathcal{F}(U \times \mathbb{A}^1),
  \end{CD}
  $$

strictly commutes.

(2) $i_1^{\ast} \circ H_U = id_{\mathcal{F}(U)}$

(3) $i_0^{\ast} \circ H_U$ is homotopic to the constant map to the
distinguished point of $\mathcal{F}(U)$ via a natural in $U$
homotopy.

Then the presheaf of simplicial spectra $C_{\ast}\mathcal{F}$ is
weakly contractible.
\end{lemma}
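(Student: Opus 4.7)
The plan is to show that for each scheme $U$ the spectrum $DC_*\mathcal{F}(U)$ is sectionwise contractible; since the contracting data will be natural in $U$, this is in fact stronger than what the lemma asks for (triviality of the sheaves of stable homotopy groups of $DC_*\mathcal{F}$). Now $DC_*\mathcal{F}(U)$ is the geometric realization of the simplicial object in spectra $[n]\mapsto \mathcal{F}(U\times\Delta^n)$, so it suffices to produce, at the level of this simplicial object, a simplicial homotopy between the identity and an endomorphism that is sectionwise null-homotopic as a map of spectra.

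The homotopy I would use is the classical one built from the standard triangulation of the prism $\Delta^1\times\Delta^n$ into $n+1$ copies of $\Delta^{n+1}$. In algebro-geometric language this triangulation is encoded by morphisms of schemes
$$\rho_i \colon \Delta^{n+1}\longrightarrow \Delta^n\times\mathbb{A}^1,\qquad 0\le i\le n,$$
where $\Delta^1$ is identified with $\mathbb{A}^1$ via its last barycentric coordinate. Using naturality (1) to make sense of $H$ at $U\times\Delta^n$, set
$$h_i \;:=\; \rho_i^{\,*}\circ H_{U\times\Delta^n} \colon \mathcal{F}(U\times\Delta^n)\longrightarrow \mathcal{F}(U\times\Delta^{n+1}).$$

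The face--degeneracy combinatorics of the prism translate, under the contravariant functor $\mathcal{F}(U\times-)$, into precisely the identities required of a simplicial homotopy on $[n]\mapsto\mathcal{F}(U\times\Delta^n)$. At the two extreme boundaries one has $d_0\circ\rho_0 = i_1$ and $d_{n+1}\circ\rho_n = i_0$, so by hypothesis (2)
$$d_0 h_0 \;=\; i_1^*\,H \;=\; \mathrm{id},$$
while $d_{n+1}h_n = i_0^*\,H$ is, by (3), homotopic to the constant map via a homotopy natural in $U$. Passing to diagonals (which preserves simplicial homotopies) therefore yields a homotopy $\mathrm{id}\simeq i_0^*H$ of endomorphisms of the spectrum $DC_*\mathcal{F}(U)$, and concatenation with the spectrum-level null-homotopy from (3) gives $\mathrm{id}\simeq 0$, naturally in $U$. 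The lemma follows.

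The only nontrivial step is the purely combinatorial verification of the full list of simplicial-homotopy identities $d_ih_j = \ldots$, $s_ih_j = \ldots$ from the prism decomposition: one has to enumerate the faces and degeneracies of the $n+1$ constituent $(n+1)$-simplices and match them against the action of the face and degeneracy operators on $\mathcal{F}(U\times\Delta^\bullet)$. This is standard but tedious bookkeeping, and is the only place where hypotheses (1)--(3) are used beyond the boundary computation shown above.
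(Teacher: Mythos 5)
Your argument is correct and takes essentially the same route as the paper: both prove sectionwise (hence stalkwise) contractibility by combining hypotheses (2) and (3) with the fact that $C_{\ast}i_0^{\ast}$ and $C_{\ast}i_1^{\ast}$ are simplicially homotopic, using (1) to get the induced map of simplicial spectra. The only difference is that the paper cites Lemma 7.1 of Friedlander--Suslin for that homotopy, whereas you reconstruct it explicitly from the prism decomposition of $\Delta^1\times\Delta^n$.
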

\begin{proof} We need to prove that $C_{\ast}\mathcal{F}$ is contractible
stalkwise, but we show that it is even contractible sectionwise. Let
$U$ be a $k$-scheme.
  Since $H$ satisfies the
property (1), it induces a morphism of simplicial spectra:
$$(C_{\ast}\mathcal{F})(U) \rightarrow (C_{\ast}\mathcal{F})(U \times \mathbb{A}^1).$$

Now, by \cite{FS}, Lemma 7.1, the morphisms
$C_{\ast}i_1^{\ast}$ and $C_{\ast}i_0^{\ast}$ are homotopic.  So the
identity of $(C_{\ast}\mathcal{F})(U)$ is homotopic to a map that is
in turn homotopic to the constant map.
\end{proof}

\section{Waldhausen categories}

\subsection{Waldhausen $K$-theory spectra} Recall that a pointed category is a category with a
distinguished zero object.  The following definition is due to
Waldhausen:

\begin{definition}A Waldhausen category is a pointed category $\mathcal{C}$
together with two subcategories $co\mathcal{C}$ (the cofibrations)
and $w\mathcal{C}$ (the weak equivalences) that satisfy respectively
the following two sets of axioms:

Cof 1.  The isomorphisms in $\mathcal{C}$ are in $co\mathcal{C}$.

Cof 2.  For every $A \in \mathcal{C}$, the arrow $\ast \rightarrow
A$ is in $co\mathcal{C}$.

Cof 3.  Cofibrations admit cobase changes:  if $A\rightarrowtail B$
is a cofibration, and $A\rightarrow C$ any arrow in $\mathcal{C}$,
then 1) the pushout $C\cup_A B$ exist and 2) the canonical arrow $C
\rightarrow C\cup_A B$ is a cofibration.

Weq 1.  The isomorphisms in $\mathcal{C}$ are in $w\mathcal{C}$.

Weq 2.  (Gluing axiom).  If in a commutative diagram

%$$B \leftarrowtail A \rightarrow C$$
%
%$$B^{\prime} \leftarrowtail A^{\prime} \rightarrow C^{\prime}$$

$$
\xymatrix{
 B  \ar[d] & A \ar[l] \ar[r] \ar[d] & C \ar[d] \\
 B'   & A' \ar[l] \ar[r]  & C'
}
$$

the horizontal arrows on the left are cofibrations and all three
certical arrows are in $w\mathcal{C}$, then the induced map

$$B\cup_A C \rightarrow B^{\prime}\cup_{A^{\prime}} C^{\prime}$$ is
in $w\mathcal{C}$.
\end{definition}

Note: Following Waldhausen we will denote $B/A$ any representative of $\ast \cup_A B$

\begin{definition}A sequence in $\mathcal{C}$ is called a cofibration sequence if it is equivalent
to a sequence of the form:
$$A \rightarrowtail B \twoheadrightarrow B/A.$$
\end{definition}

\begin{definition}
Let $\mathcal{A}$ and $\mathcal{B}$ be Waldhausen categories and $F:
\mathcal{A} \rightarrow \mathcal{B}$ a functor. we say that $F$ is
exact if it takes the zero object of $\mathcal{A}$ to the zero
object of $\mathcal{B}$, cofibrations in $\mathcal{A}$ to
cofibrations in $\mathcal{B}$, and weak equivalences in
$\mathcal{A}$ to weak equivalences in $\mathcal{B}$.
\end{definition}
Let $\mathcal{C}$ be an exact category.  Following Waldhausen, we
define the $K$-theory spectrum $\mathcal{K}(\mathcal{C})$ as
follows.  First, we define a simplicial exact category
$\mathcal{S}_{\ast}\mathcal{C}$.  We set
$\mathcal{S}_{n}\mathcal{C}$ to be the category whose objects are
$n$-diagrams of cofibrations:

$$A_1 \rightarrowtail A_2 \rightarrowtail \dots \rightarrowtail A_n$$

with a choice of quotients $A_{i,j}= A_j/A_i.$  Morphisms are just
morphisms of such diagrams.  This category can itself be given a
structure of a Waldhausen category.

Now consider the following sequence of simplicial sets:
$$\mathcal{K}(\mathcal{C})^0 =
\Omega|w\mathcal{S}_{\ast}\mathcal{C}|, \mathcal{K}(\mathcal{C})^1 =
|w\mathcal{S}_{\ast}\mathcal{C}|, \dots, \mathcal{K}(\mathcal{C})^n
= |w\mathcal{S}_{\ast}^{(n)}\mathcal{C}|, \dots
$$

It is proved in \cite{Wa85} that this sequence is an
$\Omega$-spectrum.

\begin{definition}
The $K$-theory spectrum of the Waldhausen category $\mathcal{C}$ is
the sequence $\mathcal{K}(\mathcal{C})^{\ast}$ with the standard
structure morphisms.
\end{definition}

We now define "relative" $K$-theory spectrum for an exact functor
$F: \mathcal{A} \rightarrow \mathcal{B}$ between two exact
categories.  We start with defining the simplicial exact category
$\mathcal{S}_{\ast}(\mathcal{A} \rightarrow \mathcal{B})$.  By
definition it is the pullback of the diagram:

$$\mathcal{S}_{\ast}\mathcal{A}\rightarrow \mathcal{S}_{\ast}\mathcal{B} \leftarrow
P\mathcal{S}_{\ast}\mathcal{B},$$ where
$P\mathcal{S}_{\ast}\mathcal{B}$ is the categorical path space of
$\mathcal{S}_{\ast}\mathcal{B}$.

Explicitly, the objects of $\mathcal{S}_n(F: \mathcal{A} \rightarrow
\mathcal{B})$ are pairs of diagrams of cofibrations:

$$A_{0,1} \rightarrowtail \dots \rightarrowtail A_{0,n},$$ $A_{0,i} \in \mathcal{A}$ and

$$B_0 \rightarrowtail B_1 \rightarrowtail \dots B_n,$$ $B_i \in
\mathcal{B}$, together with an isomorphism of diagrams:

$$F(A_{0,1}) \rightarrowtail \dots \rightarrowtail F(A_{0,n}) \cong B_1/B_0 \rightarrowtail
\dots B_n/B_0.$$

Now we define the relative $K$-theory spectrum to be the following sequence of simplicial
sets:
$$
\mathcal{K}(F:\mathcal{A} \rightarrow
\mathcal{B})^0 = \Omega |w\mathcal{S}_{\ast}\mathcal{S}_{\ast}(F:
\mathcal{A} \rightarrow \mathcal{B})|, 
$$
$$
\mathcal{K}(F:\mathcal{A} \rightarrow
\mathcal{B})^1 = |w\mathcal{S}_{\ast}\mathcal{S}_{\ast}(F:
\mathcal{A} \rightarrow \mathcal{B})|,
$$
$$\vdots$$
$$\mathcal{K}(F:\mathcal{A} \rightarrow
\mathcal{B})^n = |w\mathcal{S}_{\ast}\mathcal{S}_{\ast}^{(n)}(F:
\mathcal{A} \rightarrow \mathcal{B})|,$$
$$\vdots$$

\begin{definition}
The relative $K$-theory spectrum $\mathcal{K}(F:\mathcal{A}
\rightarrow \mathcal{B})$ of an exact functor $F:\mathcal{A}
\rightarrow \mathcal{B}$ is the sequence $\mathcal{K}(F:\mathcal{A}
\rightarrow \mathcal{B})^{\ast}$ with the standard structure
morphisms.
\end{definition}
The motivation for this construction is the following proposition
due to Waldhausen:

\begin{proposition}  Let $F: \mathcal{A} \rightarrow \mathcal{B}$ be an exact
functor between exact categories.  Then the sequence:

$$|w\mathcal{S}_{\ast}\mathcal{A}| \rightarrow |w\mathcal{S}_{\ast}\mathcal{A}| \rightarrow
|w\mathcal{S}_{\ast}\mathcal{S}_{\ast}(F: \mathcal{A} \rightarrow
\mathcal{B})|$$ has the homotopy type of a fibration sequence.
\end{proposition}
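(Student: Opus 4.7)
The natural approach is to realize $\mathcal{S}_\ast(F)$ as the homotopy fibre of $F:\mathcal{S}_\ast\mathcal{A}\to\mathcal{S}_\ast\mathcal{B}$ after applying $|w\mathcal{S}_\ast(-)|$, and to deloop once at the end. By construction $\mathcal{S}_\ast(F)$ is the pullback of simplicial exact categories
$$
\xymatrix{
\mathcal{S}_\ast(F) \ar[r] \ar[d] & P\mathcal{S}_\ast\mathcal{B} \ar[d] \\
\mathcal{S}_\ast\mathcal{A} \ar[r]^-{F} & \mathcal{S}_\ast\mathcal{B},
}
$$
and applying $|w\mathcal{S}_\ast(-)|$ corner-wise yields a commutative square of bisimplicial realizations. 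The first, and main, technical step is to show that this square is a homotopy pullback of spaces.

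To do this I would work one simplicial level at a time in the ``outer'' $\mathcal{S}_\ast$ direction that defines the path space. At level $n$, the right-hand vertical $\mathcal{S}_{n+1}\mathcal{B}\to\mathcal{S}_n\mathcal{B}$ sends $B_0\rightarrowtail\cdots\rightarrowtail B_{n+1}$ to $B_1/B_0\rightarrowtail\cdots\rightarrowtail B_{n+1}/B_0$; it is surjective with ``kernel'' the subcategory of flags of isomorphisms, a copy of $\mathcal{B}$. Waldhausen's additivity theorem turns the short exact sequence $\mathcal{B}\to P\mathcal{S}_\ast\mathcal{B}\to\mathcal{S}_\ast\mathcal{B}$ into a fibration sequence on $|w\mathcal{S}_\ast(-)|$ level-wise in $n$; pulling it back along $F$ produces the left column, again a level-wise fibration in $n$. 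A Bousfield--Friedlander-style realization lemma (verification of the $\pi_\ast$-Kan condition for the bisimplicial object at hand) then promotes this to a homotopy pullback of the fully realized square.

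The rest is essentially bookkeeping. The corner $|w\mathcal{S}_\ast P\mathcal{S}_\ast\mathcal{B}|$ is contractible thanks to the extra degeneracy of the path space (insert the zero object at position $0$), so the homotopy pullback identifies $|w\mathcal{S}_\ast\mathcal{S}_\ast(F)|$ with the homotopy fibre of $|w\mathcal{S}_\ast\mathcal{S}_\ast\mathcal{A}|\to |w\mathcal{S}_\ast\mathcal{S}_\ast\mathcal{B}|$. Looping once and invoking Waldhausen's delooping $|w\mathcal{S}_\ast\mathcal{C}|\simeq\Omega|w\mathcal{S}_\ast\mathcal{S}_\ast\mathcal{C}|$ (itself a special case of the same additivity-plus-extra-degeneracy mechanism applied with $\mathcal{C}=\mathcal{A}$ and $\mathcal{C}=\mathcal{B}$) rewrites this as the claimed fibration
$$|w\mathcal{S}_\ast\mathcal{A}|\to |w\mathcal{S}_\ast\mathcal{B}|\to |w\mathcal{S}_\ast\mathcal{S}_\ast(F)|.$$
The hardest point is the homotopy-pullback verification: additivity does the real work, and one must take care that the level-wise fibrations in the path-space direction pass correctly through the bisimplicial realization.
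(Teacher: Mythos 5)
The paper offers no proof of this proposition---it is quoted from Waldhausen (\cite{Wa85}, Prop.\ 1.5.5/1.5.6 and the delooping statement)---and your sketch is precisely Waldhausen's argument: the pullback square defining $\mathcal{S}_\ast(F)$, additivity applied levelwise to $\mathcal{B}\to P\mathcal{S}_\ast\mathcal{B}\to\mathcal{S}_\ast\mathcal{B}$ and to its pullback along $F$, a realization lemma (in Waldhausen's form the $\pi_\ast$-Kan hypothesis is replaced by the automatic levelwise connectivity of $|w\mathcal{S}_\ast S_n\mathcal{B}|$), contractibility of $|w\mathcal{S}_\ast P\mathcal{S}_\ast\mathcal{B}|$ via the extra degeneracy, and finally the delooping $|w\mathcal{S}_\ast\mathcal{C}|\simeq\Omega|w\mathcal{S}_\ast\mathcal{S}_\ast\mathcal{C}|$ to rotate the fibration sequence. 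You have also correctly repaired the typo in the statement as printed: the middle term must be $|w\mathcal{S}_\ast\mathcal{B}|$, not $|w\mathcal{S}_\ast\mathcal{A}|$.
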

We notice that it follows from the proposition above that the
following sequence of spectra:

$$\mathcal{K}(\mathcal{A}) \rightarrow \mathcal{K}(\mathcal{A}) \rightarrow
\mathcal{K}(F: \mathcal{A} \rightarrow \mathcal{B})$$ has the
homotopy type of a fibration sequence (or, a cofibration sequence,
since for spectra these two notions coincide).

\begin{lemma} Let

$$
\xymatrix{
 \mathcal{B}'  \ar[r]^{J} & \mathcal{B}     \\
 \mathcal{A}' \ar[u]^{F'} \ar[r]^{I}   & \mathcal{A} \ar[u]_{F} ,
}
$$

%$$
%  \begin{CD}
%      \mathcal{B}^\prime          @>>>          \mathcal{B}     \\
%   @AAA                        @AAA  \\
%      \mathcal{A}^\prime          @>>>          \mathcal{A},
%  \end{CD}
%  $$

be a diagram of exact categories and exact functors, strictly
commutative on objects.  Then the vertical functors $F^{\prime}$ and
$F$ induce a morphism

$$\mathcal{K}(I:\mathcal{A}^\prime \rightarrow
\mathcal{A}) \rightarrow \mathcal{K}(J:\mathcal{B}^\prime
\rightarrow \mathcal{B}).$$

\end{lemma}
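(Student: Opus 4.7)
The plan is to unpack the pullback definition of $\mathcal{S}_\ast(F:\mathcal{A}\rightarrow\mathcal{B})$ and use the universal property of the pullback, together with the exactness of $F$ and $F'$, to define a simplicial exact functor between the relative $\mathcal{S}_\ast$-categories. Applying Waldhausen's $K$-theory construction to this functor then yields the desired map of spectra. So the first step is to note that any exact functor $G:\mathcal{C}\rightarrow\mathcal{D}$ induces simplicial exact functors $\mathcal{S}_\ast G:\mathcal{S}_\ast\mathcal{C}\rightarrow\mathcal{S}_\ast\mathcal{D}$ and $P\mathcal{S}_\ast G:P\mathcal{S}_\ast\mathcal{C}\rightarrow P\mathcal{S}_\ast\mathcal{D}$ by applying $G$ levelwise to diagrams of cofibrations and their chosen subquotients; the output is again such a diagram because $G$ preserves cofibrations, zero objects, and pushouts along cofibrations.

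Next I would assemble $\mathcal{S}_\ast I$, $\mathcal{S}_\ast J$, $\mathcal{S}_\ast F$, $\mathcal{S}_\ast F'$ and $P\mathcal{S}_\ast F$ into a map of cospans
$$\bigl(\mathcal{S}_\ast\mathcal{A}'\xrightarrow{\mathcal{S}_\ast I}\mathcal{S}_\ast\mathcal{A}\xleftarrow{}P\mathcal{S}_\ast\mathcal{A}\bigr)\longrightarrow\bigl(\mathcal{S}_\ast\mathcal{B}'\xrightarrow{\mathcal{S}_\ast J}\mathcal{S}_\ast\mathcal{B}\xleftarrow{}P\mathcal{S}_\ast\mathcal{B}\bigr),$$
where the strict commutativity on objects of the original square is used exactly to guarantee the equality $\mathcal{S}_\ast J\circ\mathcal{S}_\ast F'=\mathcal{S}_\ast F\circ\mathcal{S}_\ast I$ on the nose, so that the universal property of the pullback in the category of (small) exact categories produces an induced exact functor
$$\mathcal{S}_\ast(I:\mathcal{A}'\rightarrow\mathcal{A})\longrightarrow\mathcal{S}_\ast(J:\mathcal{B}'\rightarrow\mathcal{B}).$$
Explicitly, an object $\bigl(A_{0,1}\rightarrowtail\cdots\rightarrowtail A_{0,n},\ B_0\rightarrowtail\cdots\rightarrowtail B_n,\ \phi:I(A_{0,\bullet})\cong B_\bullet/B_0\bigr)$ is sent to $\bigl(F'(A_{0,\bullet}),\,F(B_\bullet),\,F(\phi)\bigr)$, where we interpret $F(\phi)$ as an isomorphism $J(F'(A_{0,\bullet}))\cong F(B_\bullet)/F(B_0)$ using strict commutativity on objects and the fact that the exact $F$ preserves the chosen quotients. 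Iterating this to produce maps on $\mathcal{S}_\ast^{(n)}$, restricting to the subcategories of weak equivalences (which are preserved because $F$ and $F'$ are exact), and taking geometric realizations with the appropriate loop at level zero, gives a compatible sequence of pointed simplicial set maps that respect the structure morphisms $\sigma^n$ and therefore assembles into the required morphism of spectra $\mathcal{K}(I:\mathcal{A}'\rightarrow\mathcal{A})\rightarrow\mathcal{K}(J:\mathcal{B}'\rightarrow\mathcal{B})$.

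The only potential obstacle is verifying that the induced functor on the pullback $\mathcal{S}_\ast$-categories is genuinely exact with respect to the Waldhausen structures (whose cofibrations and weak equivalences are detected in the two factors of the pullback), but this reduces immediately, factor by factor, to the exactness of $F$ and $F'$, and is routine. The strict commutativity on objects is essential: without it we would only obtain a natural isomorphism $J\circ F'\cong F\circ I$, giving at best a zig-zag through an intermediate homotopy pullback, rather than a strictly defined map of cospans.
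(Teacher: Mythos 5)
Your proposal is correct and follows essentially the same route as the paper: both construct the levelwise exact functor sending $\bigl(A'_{0,\bullet},A_\bullet,\phi\bigr)$ to $\bigl(F'(A'_{0,\bullet}),F(A_\bullet)\bigr)$ with the induced isomorphism coming from $JF'=FI$ on objects and $F$ preserving quotients, then check compatibility with faces and degeneracies and apply the $\mathcal{K}$-construction. The only cosmetic difference is that you phrase the construction via the universal property of the pullback of cospans, whereas the paper writes out the objects of $\mathcal{S}_n$ explicitly; the content is identical.
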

\begin{proof} It is clearly sufficient to construct a simplicial exact
functor:

$$\mathcal{S}_{\ast}(I: \mathcal{A}^{\prime} \rightarrow
\mathcal{A}) \rightarrow \mathcal{S}_{\ast}(J: \mathcal{B}^{\prime}
\rightarrow \mathcal{B}).$$

Now, the objects of $\mathcal{S}_{n}(\mathcal{A}^{\prime}
\rightarrow \mathcal{A})$ are pairs of diagrams of cofibrations:

$$A_{0,1}^{\prime} \rightarrowtail \dots \rightarrowtail A_{0,n}^{\prime},$$
$A_{0,i}^{\prime} \in \mathcal{A}^{\prime}$ and

$$A_0 \rightarrowtail A_1 \rightarrowtail \dots A_n,$$ $A_i \in
\mathcal{A}$, together with an isomorphism of diagrams:

$$I(A_{0,1}^{\prime}) \rightarrowtail \dots \rightarrowtail I(A_{0,n}^{\prime}) \cong A_1/A_0 \rightarrowtail
\dots A_n/A_0$$  and

the objects of $\mathcal{S}_{n}(\mathcal{B}^{\prime} \rightarrow
\mathcal{B})$ are pairs of diagrams of cofibrations:

$$B_{0,1}^{\prime} \rightarrowtail \dots \rightarrowtail B_{0,n}^{\prime},$$
$B_{0,i}^{\prime} \in \mathcal{B}^{\prime}$ and

$$B_0 \rightarrowtail B_1 \rightarrowtail \dots B_n,$$ $B_i \in
\mathcal{B}$, together with an isomorphism of diagrams:

$$J(B_{0,1}^{\prime}) \rightarrowtail \dots \rightarrowtail J(B_{0,n}^{\prime}) \cong B_1/B_0 \rightarrowtail
\dots B_n/B_0.$$

Consider the sequence of exact functors $$F_{n}:
\mathcal{S}_{n}(\mathcal{A}^{\prime} \rightarrow \mathcal{A})
\rightarrow \mathcal{S}_{n}(\mathcal{B}^{\prime} \rightarrow
\mathcal{B}) $$ given by the rule that sends the pair of
cofibrations $$A_{0,1}^{\prime} \rightarrowtail \dots
\rightarrowtail A_{0,n}^{\prime},$$
$$A_0 \rightarrowtail A_1 \rightarrowtail \dots A_n$$

to the pair $$F^{\prime}(A_{0,1}^{\prime}) \rightarrowtail \dots
\rightarrowtail F^{\prime}(A_{0,n}^{\prime}),$$
$$F(A_0) \rightarrowtail F(A_1) \rightarrowtail \dots F(A_n).$$

Now, we have isomorphisms $$JF^{\prime}(A_{0,i}^{\prime}) =
FI(A_{0,i}^{\prime}) \simeq F(A_i/A_0) = F(A_i)/F(A_0),$$

so the pair $$F^{\prime}(A_{0,1}^{\prime}) \rightarrowtail \dots
\rightarrowtail F^{\prime}(A_{0,n}^{\prime}),$$
$$F(A_0) \rightarrowtail F(A_1) \rightarrowtail \dots F(A_n)$$
represents an element in $\mathcal{S}_n(J: \mathcal{B}^{\prime}
\rightarrow \mathcal{B})$.

Moreover, it is clear that these functors commute with the face and
degeneracy operators, so we can consider them as a simplicial
functor, that is the functor we were constructing.
\end{proof}

\subsection{Cubical diagrams of Waldhausen categories}

Here we extend the constuctions of the previous subsection to
$n$-dimensional cubes of exact categories.  We start with the
general definitions concerning $n$-cubes in categories.

Let $[\mathbf{n}]$ be a finite set with $n$ elements.  Denote
$\mathcal{P}(n)$ the category of its subsets.  An $n$-cube in a
category $\mathcal{C}$ is a functor $\mathcal{P}(n) \rightarrow
\mathcal{C}$.  More explicitly an $n$-cube in $\mathcal{C}$ is
given by a collection of objects $X_{i_0 i_1 \dots i_{n-1}}, (i_0i_1
\dots i_{n-1}) \in \{0,1\}^{\times n}$ and arrows $d_k = : X_{i_0 i_1
\dots 1 \dots i_{n-1}} \rightarrow X_{i_0i_1 \dots 0 \dots i_{n-1}}$
such that for $k < l$ all diagrams:

$$
  \begin{CD}
      X_{i_0i_1 \dots 1 \dots 0 \dots i_{n-1}}          @>>>          X_{i_0i_1 \dots 0 \dots 0 \dots i_{n-1}}     \\
   @AAA                        @AAA  \\
      X_{i_0i_1 \dots 1 \dots 1 \dots i_{n-1}}          @>>>          X_{i_0i_1 \dots 0 \dots 1 \dots i_{n-1}},
  \end{CD}
  $$

commute.

A morphism of $n$ cubes $X \rightarrow X^{\prime}$ is just a natural
transformation of functors, or a collection of arrows $$X_{i_0i_1
\dots i_{n-1}} \rightarrow X^{\prime}_{i_0i_1 \dots i_{n-1}}$$ such
that all the relevant diagrams commute.  Note that an $n$-cube in
$\mathcal{C}$ can be considered as a morphism of two $n-1$-cubes in
$n$ different ways.

We will be interested in two instances of the above construction:

\begin{itemize}

\item $\mathcal{C}$ is the category of Waldhausen categories and exact
functors

\item $\mathcal{C}$ is the category of (simplicial) spectra and
morphisms of (simplicial) spectra

\end{itemize}

Namely, consider the following situation.  We are given an $n$-cube
$$\mathcal{X}= \mathcal{X}_{i_0i_1 \dots i_{n-1}}, F= F_{i_0i_1
\dots i_{n-1}}$$ of Waldhausen categories and exact functors.
Following Grayson, we are going to associate to it a certain
$n$-simplicial category, whose $n$-fold diagonal we will call "the
iterated cofiber of the cube $\mathcal{X}$".  The construction is
$n$-dimensional generalization of what we called "relative
$K$-theory" construction of the previous section.

We start with introducing the following notation:  if $A$ is an
ordered set regarded as a category we denote the arrow $i
\rightarrow j, i\leq j$ by $j/i$.  If $A$ is an ordered set and
$\mathcal{C}$ an exact category we call a functor $Ar{A} \rightarrow
\mathcal{C}$ exact, if $F(i/i) = \ast$ for all $i$ and for all $i
\leq j \leq k$ the sequence $$F(j/i) \rightarrow F(k/i) \rightarrow
F(k/j)$$ is a cofibration sequence.

Going back to the cube $\mathcal{X}= \mathcal{X}_{i_0i_1 \dots
i_{n-1}}$, $F= F_{i_0i_1 \dots i_{n-1}}$, we define its iterated
cofiber as $n$-simplicial category $C\mathcal{X},$ given by:

$$C\mathcal{X}(A_1, A_2, \dots, A_n) =$$
$$= Exact([Ar(A_1) \rightarrow \{L\}Ar(A_1)]\times [Ar(A_2)
\rightarrow \{L\}Ar(A_2)] \times \dots \times [Ar(A_n) \rightarrow
\{L\}Ar(A_n)], \mathcal{X}),$$

where $L$ is a one element ordered set.

It is clear that a morphism of $n$-cubes of exact categories induces
an $n$-simplicial functor on iterated cofibers.

\section{$K_0$-presheaves of spectra}
Throughout the rest of the text  by a field we mean a field over which resolution of singularities holds and by a scheme we mean a smooth
scheme of finite type over such a field.

The category $K_0(Sm(k))$ is defined in \cite{Wa97}.

\begin{definition}(\cite{Wa97}) An abelian  $K_0$-presheaf on $Sm(k)$ is a contravariant
functor $K_0(Sm(k)) \rightarrow Ab$.
\end{definition}
\begin{definition} $K_0$-presheaf of spectra is a presheaf $\mathcal{F}:Sm(k)
\rightarrow Spectra$ with a $K_0$-presheaf structure on each
presheaf $\pi_i^s \mathcal{F}$.
\end{definition}
We denote $F_{Nis}$ the sheafification of $F$ in Nisnevich topology, and $h_i(F)$ the $i$-th cohomology of the complex associated to the simplicial sheaf $C_*(F)$.
\begin{lemma}  Let $F$ be an abelian $K_0$-presheaf such that $F_{Nis}=0$.
Then $(h_i(F))_{Nis}=0$ for $i\geq 0$.
\end{lemma}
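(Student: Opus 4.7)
The plan is to transpose Voevodsky's proof of Cor.~5.11 of \cite{Vo99} from presheaves with transfers to the $K_0$-presheaf setting, with the $K_0(Sm(k))$-structure playing the role of transfers. The argument divides naturally into three steps.

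The first step is to check that the $C_\ast$ construction preserves the $K_0$-presheaf property: since products with the fixed smooth scheme $\Delta^n$ respect the product structure on $K_0(Sm(k))$, each presheaf $U\mapsto F(U\times\Delta^n)$ is again an abelian $K_0$-presheaf, the simplicial face and degeneracy maps are morphisms of $K_0$-presheaves, and therefore every homology presheaf $h_i(F)$ inherits a canonical $K_0$-presheaf structure.

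The second step is homotopy invariance of each $h_i(F)$, which is formally the same computation as in Lemma~1.2: the zero- and one-sections of $\mathbb{A}^1$ pull $C_\ast F$ back to chain-homotopic maps via \cite{FS}, Lemma~7.1, so they agree on homology, and this together with the evident retraction forces $h_i(F)(U)\to h_i(F)(U\times\mathbb{A}^1)$ to be an isomorphism.

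The third and decisive step is to invoke the $K_0$-analogue of Voevodsky's theorem that a homotopy-invariant presheaf with transfers whose Nisnevich sheafification vanishes must itself be Nisnevich locally zero.  The subtlety is that the hypothesis $F_{Nis}=0$ only gives vanishing of $F$ at Nisnevich stalks of a base scheme $U$, whereas computing $(C_n F)_{Nis}$ requires vanishing on Nisnevich neighbourhoods of higher-dimensional subschemes $\{x\}\times\Delta^n\subset U\times\Delta^n$; bridging this gap is precisely what transfers achieve in \cite{Vo99}, and the same role will here be played by the correspondences built into the definition of $K_0(Sm(k))$, combined with the resolution-of-singularities assumption standing throughout this section and the structural analysis of \cite{Wa97}.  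Once $(C_\ast F)(V)$ has been shown to be acyclic on a cofinal system of Nisnevich neighbourhoods of every point, commuting stalks with homology yields $(h_i F)_{Nis}=0$ for all $i\geq 0$.  The main obstacle is verifying that the transfer data packaged in a $K_0$-presheaf is rich enough to sustain this Hensel-local argument; by contrast, the first two steps are purely formal.
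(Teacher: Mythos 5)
Your outline reproduces the general shape of Voevodsky's argument, but the decisive third step is a genuine gap rather than a proof: the ``$K_0$-analogue of Voevodsky's theorem'' that you invoke is, for all practical purposes, the statement of the lemma itself. As literally phrased (a homotopy-invariant presheaf whose Nisnevich sheafification vanishes is ``Nisnevich locally zero'') it is a tautology, since vanishing of the sheafification already means vanishing of all Nisnevich stalks; what is actually needed is the nontrivial assertion that $F_{Nis}=0$ forces $F$ (which is \emph{not} assumed homotopy invariant) to vanish on suitable semilocal or Hensel-local neighbourhoods of the simplices $\{x\}\times\Delta^n$, so that $C_\ast F$ becomes stalkwise acyclic. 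That is exactly the hard core of Cor.~5.11 of \cite{Vo99}, and you explicitly leave it open (``the main obstacle is verifying that the transfer data \dots is rich enough''). Note also that your steps 1 and 2 cannot feed into step 3 in the order you propose: a vanishing theorem for homotopy invariant presheaves cannot be applied to $h_i(F)$ without already knowing $(h_i(F))_{Nis}=0$, which is the conclusion sought.

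The paper closes this gap by quoting the $K_0$-presheaf analogue of Voevodsky's machinery in already-packaged form: Lemma 7.5 of \cite{Wa97} asserts that for an abelian $K_0$-presheaf $F$ the condition $(h_i(F))_{Nis}=0$ for all $i\geq 0$ is equivalent to $\mathrm{Ext}^i(F_{Nis},G)=0$ for all $i\geq 0$ and every homotopy invariant pretheory $G$ which is a Nisnevich sheaf. Under the hypothesis $F_{Nis}=0$ the Ext condition is vacuously satisfied, and the lemma follows in one line. If you want to keep your route, the correct move is not to re-derive Voevodsky's theorem for $K_0$-presheaves from scratch but to locate the corresponding statement in \cite{Wa97}; that is where the resolution-of-singularities hypothesis and the correspondence structure of $K_0(Sm(k))$ are actually put to work.
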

\begin{proof} By \cite{Wa97} Lemma 7.5 the following conditions
are equivalent for an abelian $K_0$-presheaf:

(1) $(h_i(F))_{Nis}=0$ for $i\geq 0$

(2) $Ext^i(F_{Nis}, G)=0$ for all $i \geq 0$ and every $G$ that is a
homotopy invariant pretheory which is also a Nisnevich sheaf.

Since in our case $F_{Nis}=0$, the condition (2) is satisfied, so we
have $(h_i(F))_{Nis}=0$ for $i\geq 0$.

\end{proof}
\begin{lemma}  Let $\mathcal{F}$ be a weakly contractible $K_0$-presheaf of
spectra. Then the simplicial presheaf of spectra
$C_{\ast}\mathcal{F}$ is weakly contractible.
\end{lemma}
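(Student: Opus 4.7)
The plan is to reduce to a stalkwise statement and then apply Jardine's Bousfield--Friedlander spectral sequence together with the previous lemma on abelian $K_0$-presheaves. By the definition of weak equivalence of presheaves of simplicial spectra, $C_\ast \mathcal{F}$ is weakly contractible iff the associated presheaf of spectra $D(C_\ast\mathcal{F})$ is weakly contractible, i.e., the Nisnevich sheaves $\pi_i^s D(C_\ast\mathcal{F})$ vanish for all $i$. Since $Sm(k)_{Nis}$ has enough points and Nisnevich stalks are filtered colimits that commute with $\pi_i^s$ and with the diagonal construction $D$, it is enough to show that for every Nisnevich point $x$ of $Sm(k)$ the spectrum $D(X)$ is weakly contractible, where $X:=(C_\ast\mathcal{F})_x$.

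Apply Jardine's spectral sequence to the simplicial spectrum $X$:
$$E_2^{p,q} = \pi_p\bigl((\pi_q^{vert} X)_\ast\bigr) \Rightarrow \pi_{p+q}^s D(X).$$
The $n$-th vertical spectrum of $X$ is $v_n X = \mathcal{F}(-\times\Delta^n)_x$, so
$$\pi_q^s(v_n X) = \bigl(\pi_q^s\mathcal{F}(-\times\Delta^n)\bigr)_x = \bigl(C_n(\pi_q^s\mathcal{F})\bigr)_x.$$
Taking simplicial homotopy in $n$ (equivalently, the homology of the Moore complex), and using that homology commutes with filtered colimits of abelian groups, one obtains
$$\pi_p\bigl((\pi_q^{vert} X)_\ast\bigr) = \bigl(h_p(\pi_q^s\mathcal{F})\bigr)_x.$$
Now $\pi_q^s\mathcal{F}$ is an abelian $K_0$-presheaf by the definition of $K_0$-presheaf of spectra, and the hypothesis that $\mathcal{F}$ is weakly contractible says precisely that $(\pi_q^s\mathcal{F})_{Nis}=0$ for every $q$. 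The previous lemma then yields $(h_p(\pi_q^s\mathcal{F}))_{Nis}=0$ for all $p\geq 0$, so in particular every stalk at $x$ vanishes. Hence every $E_2^{p,q}$ term is zero, the spectral sequence collapses trivially, and $\pi_i^s D(X)=0$ for all $i$, as required.

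The real content lies in the preceding lemma; the present argument is primarily organizational, matching the $K_0$-presheaf hypothesis on each $\pi_q^s\mathcal{F}$ against the $E_2$-page of Jardine's spectral sequence. The main technical point to check carefully is that stalks commute with every operation involved --- stable homotopy groups, the diagonal $D$, the vertical spectra $v_n$, the Moore complex of a simplicial abelian group, and the formation of the spectral sequence itself --- but each of these is a standard filtered-colimit argument. Once those compatibilities are in place, the conclusion is immediate.
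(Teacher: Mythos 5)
Your proof is correct and follows essentially the same route as the paper: reduce to stalks (equivalently, sections over henselian local schemes), identify the vertical homotopy groups of $C_\ast\mathcal{F}$ with $C_\ast(\pi_q^s\mathcal{F})$, invoke the preceding lemma to kill the $E_2$-page of Jardine's Bousfield--Friedlander spectral sequence, and conclude that the diagonal is stalkwise contractible. The only cosmetic difference is that you phrase the reduction via Nisnevich points and filtered-colimit compatibilities, whereas the paper evaluates directly on henselian local schemes.
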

\begin{proof} Since $\mathcal{F}$ is weakly contractible, by the very
definition we have for $i \geq 0$
$$(\pi_i\mathcal{F})_{Nis}=0.$$  So by Lemma 1 $i \geq
0$

$$(C_{\ast}\pi_i\mathcal{F})_{Nis}$$ is a contractible simplicial
abelian sheaf.  So for every henselian local scheme $X$ over $k$,
the simplicial abelian group $$\pi_i\mathcal{F}(X) \leftarrow
\pi_i\mathcal{F}(X \times \Delta^1) \leftarrow \pi_i\mathcal{F}(X
\times \Delta^2) \leftarrow \dots$$ is contractible, or,for all $j
\geq 0$,

$$\pi_j(\pi_i\mathcal{F}(X
\times \Delta^{\ast})) =0$$

Now, the simplicial abelian group above can be interpreted as
$\pi_i^{vert} C_{\ast}\mathcal{F}(X)$.

Consider now the Bousfield-Friedlander-Jardine spectral sequence of
Proposition 2.2:

$$\pi_j(\pi_i C_{\ast}\mathcal{F}(X)) \Rightarrow \pi_{i+j}D(C_{\ast}\mathcal{F}(X)).$$

By the previous discussion the spectral sequence degenerates proving
that $C_{\ast}\mathcal{F}(X)$ is contractible for every henselian
local scheme, or that $C_{\ast}\mathcal{F}$ is a contractible
presheaf.

\end{proof}

\section{Filtrations on the $K$-theory presheaf of spectra of a scheme}

\subsection{Presheaves of $K$-theory spectra}

\subsubsection{Categories of complexes of finitely supported sheaves}

\begin{definition} Let $X$ be a noetherian scheme.  Let $Sch/X$ denote its big
Zariski site.  A big coherent sheaf $M$ on $X$ is a contravariant functor $Sch/X
\rightarrow Ab$ such that:

(1)  for every $U \in Sch/X$ the restriction $M_U$ of $M$ to $Sch/U$
is a coherent $\mathcal{O}_U$-module and

(2)  for every arrow $f:U \rightarrow V$ in $Sch/X$ the induced
homomorphism $f^{\ast}M_V \rightarrow M_U$ is an isomorphism.
\end{definition}

\begin{definition} A big vector bundle $E$ on $X$ is a contravariant functor $Sch/X
\rightarrow Ab$ such that:

(1)  for every $U \in Sch/X$ the restriction $E_U$ of $E$ to $Sch/U$
is a locally free coherent $\mathcal{O}_U$-module and

(2)  for every arrow $f:U \rightarrow V$ in $Sch/X$ the induced
homomorphism $f^{\ast}E_V \rightarrow E_U$ is an isomorphism.
\end{definition}

With these definitions at our disposal we introduce the following
category that is central to our considerations.  Let $S$ and $X$ be
schemes over $k$.

\begin{definition}
The category $\mathcal{P}(S,X)$ is the full subcategory of the
category of bounded complexes in the category of big coherent
sheaves over $S \times X$, consisting of complexes $M^{\ast}$ such
that for every $i$, $M^i$ is supported on a finite union of
subschemes of $S \times X$ finite over $S$ and $(p_{S})_{\ast}M^{i}$
is a locally free coherent $\mathcal{O}_S$-module.
\end{definition}
It is straightforward to establish the following:
\begin{proposition}
The category $\mathcal{P}(S,X)$ has a structure of a Waldhausen
category with cofibrations - degreewise split monomorphisms and weak
equivalences - quasiisomorphisms.
\end{proposition}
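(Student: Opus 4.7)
The plan is to verify the two Waldhausen axiom groups by routine checks, with the only real content being that the support condition and locally-free-pushforward condition defining $\mathcal{P}(S,X)$ are preserved under pushouts along degreewise split monomorphisms.

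First I would dispense with the trivial axioms. Cof 1 and Weq 1 are automatic since any isomorphism in a category of chain complexes is both a degreewise split monomorphism and a quasi-isomorphism. Cof 2 is automatic because the zero map $0 \to A^i$ is split injective by the canonical projection $A^i \to 0$.

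The substantive axiom is Cof 3. Given a degreewise split monomorphism $A \rightarrowtail B$ in $\mathcal{P}(S,X)$ and an arbitrary morphism $A \to C$, I would form the pushout $B \cup_A C$ in the category of bounded complexes of big coherent sheaves on $S \times X$. In each degree, since $A^i \to B^i$ is split, choose a splitting and write $B^i \cong A^i \oplus Q^i$ where $Q^i = B^i/A^i$; then $(B \cup_A C)^i \cong C^i \oplus Q^i$ (with the differential inherited from the pushout), and the canonical map $C \to B \cup_A C$ is degreewise the inclusion of a direct summand, hence a degreewise split monomorphism. It remains to check that this pushout lies in $\mathcal{P}(S,X)$. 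Support: each $Q^i$ is a direct summand of $B^i$, hence is supported on a finite union of subschemes of $S \times X$ finite over $S$; therefore $(B \cup_A C)^i$ is supported on the union of the supports of $C^i$ and $Q^i$, which is again a finite union of subschemes finite over $S$. Pushforward: applying $(p_S)_*$ to the degreewise split short exact sequence $0 \to A \to B \to Q \to 0$ we see that $(p_S)_* Q^i$ is a direct summand of the locally free sheaf $(p_S)_* B^i$, hence locally free, and thus $(p_S)_*(B \cup_A C)^i = (p_S)_* C^i \oplus (p_S)_* Q^i$ is locally free as well.

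Finally, for the gluing axiom Weq 2, I would exploit the same degreewise splitting. Given the commutative diagram with horizontal left-arrows degreewise split monomorphisms and all three vertical arrows quasi-isomorphisms, the pushouts sit in degreewise split short exact sequences
\[
0 \to C \to B \cup_A C \to B/A \to 0, \qquad 0 \to C' \to B' \cup_{A'} C' \to B'/A' \to 0,
\]
and the induced map $B/A \to B'/A'$ is a quasi-isomorphism (apply the five lemma to the long exact homology sequence of $0 \to A \to B \to B/A \to 0$ and its primed analogue, using that $A \to A'$ and $B \to B'$ are quasi-isomorphisms). A second application of the five lemma to the long exact homology sequences of the two displayed short exact sequences then yields that $B \cup_A C \to B' \cup_{A'} C'$ is a quasi-isomorphism, completing the verification. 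I do not expect any genuine obstacle; the only point requiring attention is the local freeness of $(p_S)_* Q^i$, which is precisely where the hypothesis that the monomorphism is \emph{degreewise split} (rather than merely degreewise injective) is needed.
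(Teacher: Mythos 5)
Your verification is correct and is precisely the routine check that the paper omits (it states the proposition with only the remark ``It is straightforward to establish the following''). The one point of genuine content — that the degreewise splitting makes $(p_S)_*Q^i$ a direct summand of a locally free sheaf, hence locally free, so the pushout stays in $\mathcal{P}(S,X)$ — is exactly where the hypothesis of \emph{split} monomorphisms is used, and you identify it correctly.
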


The following two propositions are due to M. Walker (\cite{Wa97}): 
\begin{proposition}
Let $S \rightarrow S^{\prime}$ and $X \rightarrow X^{\prime}$ be a
morphism of schemes over $k$. Then the inverse image of sheaves and
the direct image of sheaves induce morphisms of spectra
$\mathcal{K}(\mathcal{P}(S^{\prime}, X)) \rightarrow
\mathcal{K}(\mathcal{P}(S, X))$ and $\mathcal{K}(\mathcal{P}(S, X))
\rightarrow \mathcal{K}(\mathcal{P}(S, X^{\prime}))$.
\end{proposition}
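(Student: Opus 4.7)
The plan is to construct, for each morphism $f: S \to S'$, an exact functor $(f \times \mathrm{id}_X)^*: \mathcal{P}(S', X) \to \mathcal{P}(S, X)$ and, for each morphism $g: X \to X'$, an exact functor $(\mathrm{id}_S \times g)_*: \mathcal{P}(S, X) \to \mathcal{P}(S, X')$; applying Waldhausen's $K$-theory construction of the preceding section to each exact functor will yield the required morphisms of spectra.

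For the inverse image, given a complex $M^\bullet \in \mathcal{P}(S', X)$, I would take $(f \times \mathrm{id})^* M^\bullet$ termwise, where pullback of big coherent sheaves is simply the restriction of the underlying functor on $\mathrm{Sch}/(S' \times X)$ to $\mathrm{Sch}/(S \times X)$. That the result lands in $\mathcal{P}(S, X)$ rests on two observations: first, if $M^i$ is supported on $Z_i \subset S' \times X$ with $Z_i \to S'$ finite, then $(f \times \mathrm{id})^* M^i$ is supported on $Z_i \times_{S'} S$, which is finite over $S$ by base change of finite morphisms; second, since $Z_i \to S'$ is affine, the hypothesis that $(p_{S'})_* M^i$ is locally free is equivalent to $M^i$ being flat over $S'$, and flat base change then gives $(p_S)_* (f \times \mathrm{id})^* M^i \cong f^* (p_{S'})_* M^i$, which is locally free on $S$. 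For the direct image, the locally-free-pushforward condition is immediate from $p_{S'} \circ (\mathrm{id}_S \times g) = p_S$, so that $(p_{S'})_* (\mathrm{id}_S \times g)_* M^i = (p_S)_* M^i$. The support condition uses that $Z_i$, being finite over $S$, is proper over $S$, while $S \times X'$ is separated over $S$; hence $Z_i \to S \times X'$ is proper, its scheme-theoretic image is closed, proper over $S$, and has finite fibres (quotients of those of $Z_i \to S$), so it is finite over $S$.

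What remains is to verify exactness of each functor in the Waldhausen sense. Preservation of the zero object and of cofibrations (degreewise split monomorphisms) is automatic. The delicate point is preservation of weak equivalences, that is, quasi-isomorphisms, and this is where I expect the main obstacle to lie for the inverse image functor, since naive pullback is not generally exact on complexes of coherent sheaves. The resolution exploits the flatness of each $M^i$ over $S'$ established above: since $\mathcal{O}_{S \times X} \simeq \mathcal{O}_{S' \times X} \otimes^L_{\mathcal{O}_{S'}} \mathcal{O}_S$, flatness of $M^i$ over $\mathcal{O}_{S'}$ forces the naive pullback $(f \times \mathrm{id})^* M^i$ to coincide with its left-derived version, so quasi-isomorphisms in $\mathcal{P}(S', X)$ are carried to quasi-isomorphisms in $\mathcal{P}(S, X)$. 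For the direct image the situation is simpler: each sheaf in the complex is supported on a subscheme finite, hence affine, over $S$, so $(\mathrm{id}_S \times g)_*$ restricts to an exact functor on such sheaves and preserves quasi-isomorphisms termwise. With exactness in hand, the functoriality of the Waldhausen $\mathcal{S}_\bullet$-construction in exact functors delivers the induced morphisms of spectra and completes the proof.
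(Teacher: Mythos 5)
The paper itself offers no proof of this proposition: it is stated as a quoted result of M.~Walker and the reader is referred to \cite{Wa97}. Your argument supplies the details and is essentially the proof one finds there: identify the two operations as exact functors between the Waldhausen categories $\mathcal{P}(-,-)$ and invoke functoriality of the $\mathcal{S}_\bullet$-construction. Your verifications are sound, and you correctly isolate the one genuinely delicate point, namely that termwise pullback preserves quasi-isomorphisms; the observation that the local-freeness of $(p_{S'})_*M^i$ forces each $M^i$ to be flat over $\mathcal{O}_{S'}$, so that the naive pullback computes the derived one, is exactly the right mechanism (equivalently: a bounded acyclic complex of $\mathcal{O}_{S'}$-flat sheaves remains acyclic after applying $-\otimes_{\mathcal{O}_{S'}}\mathcal{O}_S$).

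Two small refinements. First, the base-change isomorphism $(p_S)_*(f\times\mathrm{id})^*M^i\cong f^*(p_{S'})_*M^i$ does not need flatness of $f$: since $M^i$ is supported on a closed subscheme finite, hence affine, over $S'$, it is affine base change that applies, and it holds unconditionally. Second, for the direct image you should also remark that $(\mathrm{id}_S\times g)_*M^i$ is again a \emph{big} coherent sheaf, i.e.\ that its formation commutes with pullback along arrows of $\mathrm{Sch}/(S\times X')$ up to canonical isomorphism; this is again guaranteed by the finiteness of the support over $S$ together with the local freeness of $(p_S)_*M^i$, and it is the structural point Walker's big-sheaf formalism is designed to make strict. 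Neither issue affects the correctness of your argument.
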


\begin{proposition} (\cite{Wa97}) Let $U$ and $V$ be schemes. The following diagram is homotopy
cartesian for every henselian local scheme $X$:

$$
  \begin{CD}
      \mathcal{K}(\mathcal{P}(X \times \Delta^{\ast},V))          @>>>          \mathcal{K}(\mathcal{P}(X \times \Delta^{\ast},U\cup V))     \\
   @AAA                        @AAA  \\
      \mathcal{K}(\mathcal{P}((X \times \Delta^{\ast},U\cap V))          @>>>          \mathcal{K}(\mathcal{P}(X \times \Delta^{\ast},U)),
  \end{CD}
  $$

\end{proposition}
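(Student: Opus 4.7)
The plan is to realise the square as the total homotopy fibre of a $2$-cube of presheaves of $K$-theory spectra in the auxiliary variable $S$, upgrade that fibre to a $K_0$-presheaf of spectra in Walker's sense, and then invoke the second lemma of Section 1.3 to descend Mayer-Vietoris from Nisnevich stalks to the simplicial scheme $X\times\Delta^{\ast}$.

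More precisely, fix $U$ and $V$ and let $\mathcal{F}$ denote the total homotopy fibre (equivalently the iterated cofibre in the sense of Section 1.2.2) of the $2$-cube of presheaves of spectra on $Sm(k)$
$$ S\longmapsto \bigl[\mathcal{K}(\mathcal{P}(S,U\cap V))\to \mathcal{K}(\mathcal{P}(S,U))\to \mathcal{K}(\mathcal{P}(S,U\cup V))\leftarrow \mathcal{K}(\mathcal{P}(S,V))\bigr]. $$
The first step is to use the bivariance established in the previous proposition, together with pushforward transfers along finite maps, to promote $\mathcal{F}$ to a $K_0$-presheaf of spectra in the sense of Section 1.3.

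The main step is to verify that this $\mathcal{F}$ is weakly contractible, i.e.\ that for every henselian local $S$ the Mayer-Vietoris square of spectra $\mathcal{K}(\mathcal{P}(S,-))$ on the cover $\{U,V\}$ of $U\cup V$ is homotopy cartesian. I would prove this by appealing to Thomason's approximation theorem (Theorem 1.9.8 of \cite{TT88}) to identify $\mathcal{K}(\mathcal{P}(S,W))$ with the $K$-theory spectrum of the Waldhausen category of perfect complexes on $S\times W$ with support finite over $S$, and then running the classical Zariski Mayer-Vietoris argument of Thomason-Trobaugh for each $W$. The ``finite over $S$'' support condition is preserved by restriction to opens and by the gluing pushouts that enter the argument, so it passes through the standard proof essentially unchanged.

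Once weak contractibility of $\mathcal{F}$ is established, the $K_0$-presheaf lemma of Section 1.3 implies that the presheaf of simplicial spectra $C_{\ast}\mathcal{F}$ is sectionwise contractible on every henselian local scheme $X$. Because the total fibre of a finite cube of spectra commutes with the termwise evaluation $\mathcal{G}\mapsto\mathcal{G}(X\times\Delta^{\ast})$, contractibility of $C_{\ast}\mathcal{F}(X)$ is exactly the assertion that the square of the proposition is homotopy cartesian. The main obstacle I anticipate is the middle step: carrying the finite-over-$S$ support condition carefully through the Thomason-Trobaugh derived-category equivalence and through the choice of cofinal subcategories used in the gluing lemma. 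Everything else is a formal consequence of the framework assembled in Sections 1.2 and 1.3.
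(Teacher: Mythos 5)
You have correctly reconstructed the overall architecture that the paper relies on (the paper itself does not prove this proposition but cites Walker for it): form the total fibre $\mathcal{F}$ of the square as a presheaf in the variable $S$, observe it is a $K_0$-presheaf of spectra, prove its Nisnevich stalks vanish, and then use the lemma of Section 1.3 to conclude that $C_{\ast}\mathcal{F}$ is contractible, which is the asserted homotopy cartesianness over $X\times\Delta^{\ast}$. The last reduction is also right, since realization of simplicial spectra commutes with finite total (co)fibres.

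The gap is in your main step, the stalkwise Mayer--Vietoris for henselian local $S$, and the Thomason--Trobaugh route you propose for it does not work. First, the variance is wrong: the maps in the square are \emph{direct image} functors $\mathcal{P}(S,U\cap V)\to\mathcal{P}(S,U)\to\mathcal{P}(S,U\cup V)$ (covariant in the second variable, by the preceding proposition), whereas the Zariski Mayer--Vietoris of \cite{TT88} is built from \emph{restriction} to open subschemes and localization with respect to closed complements. Second, the claim that the finite-over-$S$ support condition ``is preserved by restriction to opens'' is false: if $Z\subset S\times(U\cup V)$ is finite over $S$, then $Z\cap(S\times U)$ is open in $Z$ and hence only quasifinite over $S$ (e.g.\ $Z=V(x^2-t)\subset\mathbb{A}^1_t\times\mathbb{A}^1_x$ restricted to $x\neq 0$ is not finite over $\mathbb{A}^1_t$), so the restriction functors you would need do not even land in the categories $\mathcal{P}(S,-)$. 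Third, as a sanity check: if the classical argument applied ``essentially unchanged'' it would prove the square homotopy cartesian for \emph{every} $S$, which would make the henselian hypothesis and the entire $K_0$-presheaf/$C_{\ast}$ machinery superfluous; the statement is genuinely false sectionwise for general $S$, which is why the detour through stalks is needed. The correct argument at a henselian local $S$ is the one the paper itself uses for the sibling statement in Step 5 of Chapter 2: a scheme $Z$ finite (or quasifinite) over a henselian local base decomposes into a disjoint union of local schemes, and each local piece, having its closed point in $S\times U$ or in $S\times V$, lies entirely in one of the two opens; this splits $\mathcal{P}(S,U\cup V)$ by supports and makes the square (co)cartesian on the nose, after which your remaining steps go through.
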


%Note that it implies that $C_{\ast}$ of the iterated cofiber of the
%above cube is a (locally) contractible presheaf.
The following purity result is easy: 
\begin{proposition}
Let $i: Z \rightarrow X$ be an inclusion of a closed
subscheme.  Denote $\mathcal{P}^{Z}(S, X)$ the full subcategory of
$\mathcal{P}(S, X)$ consisting of complexes $M^{\ast}$ with every
$M^{i}$ supported on a subscheme of $S \times Z$.  Then the direct image
induces 
%an equivalence of categories: $\mathcal{P}(S, Z) \rightarrow
%\mathcal{P}^{Z}(S, X)$, and consequently 
a weak equivalence the
respective $K$-theory spectra.
\end{proposition}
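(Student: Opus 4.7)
The plan is to invoke Thomason--Trobaugh Theorem 1.9.8 (cited in the introduction), which reduces the claim to showing that the direct image functor
$$(\mathrm{id}_S \times i)_* : \mathcal{P}(S, Z) \to \mathcal{P}^Z(S, X)$$
is exact in the Waldhausen sense and induces an equivalence between the triangulated categories obtained by inverting quasi-isomorphisms.

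First I would verify that this functor is well defined and exact. Since $\mathrm{id}_S \times i$ is a closed immersion, its direct image is exact on big coherent sheaves, preserves coherence, commutes with the projection $p_S$, and hence preserves both the finiteness-over-$S$ condition and the local freeness of $(p_S)_\ast M^i$ for each term. Applied termwise it lands in $\mathcal{P}^Z(S,X)$ and preserves both degreewise split monomorphisms and quasi-isomorphisms. Full faithfulness at the derived level is classical for closed immersions: $i_\ast$ admits left and right adjoints $i^\ast$ and $i^!$ and is fully faithful on (quasi)coherent sheaves, and, since $i$ is affine, this property transports to the bounded derived categories.

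The substantive content is essential surjectivity. Given $M^\bullet \in \mathcal{P}^Z(S,X)$, each term $M^i$ is set-theoretically supported on $S \times Z$, so there exists $N$ with $\mathcal{I}^N \cdot M^i = 0$ for every $i$ (a uniform $N$ exists because $M^\bullet$ is bounded), where $\mathcal{I}$ denotes the ideal sheaf of $S \times Z$ in $S \times X$. The $\mathcal{I}$-adic filtration
$$M^\bullet \supset \mathcal{I} M^\bullet \supset \mathcal{I}^2 M^\bullet \supset \cdots \supset \mathcal{I}^N M^\bullet = 0$$
is a finite filtration in $\mathcal{P}^Z(S,X)$ whose successive quotients are complexes of $\mathcal{O}_{S \times Z}$-modules, hence lie in the essential image of $(\mathrm{id}_S \times i)_\ast$. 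Iterated application of the two-out-of-three property for the triangulated subcategory generated by this image then puts $M^\bullet$ itself in that subcategory, giving essential surjectivity.

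The delicate point is ensuring that the subquotients of the $\mathcal{I}$-adic filtration genuinely represent objects of $\mathcal{P}(S,Z)$, in particular that their pushforwards to $S$ remain locally free. I would handle this by replacing each subquotient, up to quasi-isomorphism, by a finite locally free resolution; this is possible because $S$ is smooth (as assumed throughout the section on $K_0$-presheaves), so coherent sheaves on $S$ that are pushforwards from subschemes finite over $S$ admit finite locally free resolutions by Hilbert's syzygy theorem. Assembling these resolutions compatibly with the $\mathcal{I}$-adic filtration, so as to produce an actual object of $\mathcal{P}(S,Z)$ mapping quasi-isomorphically to $M^\bullet$ under $(\mathrm{id}_S \times i)_\ast$, is the main technical obstacle of the argument.
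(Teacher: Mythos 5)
The paper offers no proof of this proposition at all (it is merely labelled an ``easy'' purity result), so I can only judge your argument on its own terms, and its central step fails. Theorem 1.9.8 of [TT88] requires the functor to induce an equivalence of derived homotopy categories $w^{-1}\mathcal{P}(S,Z)\rightarrow w^{-1}\mathcal{P}^{Z}(S,X)$, and for a closed immersion of positive codimension, with the support condition read set-theoretically, this is false. Take $S=\mathrm{Spec}\,k$, $X=\mathbb{A}^1=\mathrm{Spec}\,k[t]$, $Z=\{0\}$: then $w^{-1}\mathcal{P}(S,Z)\simeq D^b(k)$, while in $w^{-1}\mathcal{P}^{Z}(S,X)$ one has $\mathrm{Ext}^1_{k[t]}(k[t]/(t),k[t]/(t))=k\neq 0=\mathrm{Ext}^1_k(k,k)$, so $i_{\ast}$ is not full; and $k[t]/(t^2)$ (a legitimate object: supported at $0$, finite over $S$, with free pushforward) is not quasi-isomorphic to the pushforward of any complex of $k$-vector spaces, so $i_{\ast}$ is not essentially surjective either. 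Your assertion that ``full faithfulness at the derived level is classical for closed immersions'' is the error: $i_{\ast}$ is fully faithful on abelian categories of sheaves, but neither $Li^{\ast}i_{\ast}$ nor $i^{!}i_{\ast}$ is the identity, and $D^b(Z)\rightarrow D^b_Z(X)$ is genuinely not an equivalence. This is exactly why d\'evissage is a theorem about $K$-theory and not about derived categories, and why the statement cannot be reduced to [TT88] 1.9.8.

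Even granting the strategy, the essential-surjectivity argument has further defects beyond the one you flag: the filtration steps $\mathcal{I}^kM^{\ast}$ themselves need not lie in $\mathcal{P}^{Z}(S,X)$, since there is no reason for $(p_S)_{\ast}(\mathcal{I}^kM^{i})$ to remain locally free; and the ``two-out-of-three'' closure only places $M^{\ast}$ in the triangulated subcategory generated by the image, which equals the essential image only once full faithfulness is known --- the very thing that fails. Two ways forward: if ``supported on a subscheme of $S\times Z$'' is read scheme-theoretically (each $M^{i}$ annihilated by the ideal of $S\times Z$), then every object of $\mathcal{P}^{Z}(S,X)$ is literally a pushforward from $S\times Z$, the functor is an equivalence of Waldhausen categories, and there is nothing to prove --- presumably the reading under which the author calls the result easy. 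If the support is set-theoretic, the honest route is a genuine d\'evissage argument: identify the $K$-theory of these complicial categories with that of the underlying exact categories of sheaves (Gillet--Waldhausen) and then apply Quillen's d\'evissage and resolution theorems, rather than any comparison of derived categories.
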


\begin{proposition} (Homotopy invariance in the second argument): Let
 $X$ be a scheme over $k$.
 The natural morphism of presheaves $$C_{\ast}\mathcal{K}(\mathcal{P}(\ast, X)) \rightarrow C_{\ast}\mathcal{K}(\mathcal{P}(\ast, X \times
 \mathbb{A}^1))$$ induced by embedding $X \rightarrow X \times \mathbb{A}^1$ given by $$x \rightarrow (x,0)$$
  is a weak equivalence of presheaves.
\end{proposition}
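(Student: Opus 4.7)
The plan is to apply Lemma~1.1.5 to the cofiber of the direct-image map, using an $\mathbb{A}^1$-deformation given by scaling the $\mathbb{A}^1$-coordinate on the target. Write $\mathcal{F}(U) = \mathcal{K}(\mathcal{P}(U, X))$ and $\mathcal{G}(U) = \mathcal{K}(\mathcal{P}(U, X \times \mathbb{A}^1))$, and denote the map in question by $i_{0,\ast}\colon \mathcal{F}\to\mathcal{G}$.  By Proposition~1.4.5, the projection $p\colon X\times\mathbb{A}^1\to X$ induces $p_{\ast}\colon\mathcal{G}\to\mathcal{F}$, and since $p\circ i_0 = \mathrm{id}_X$ one has $p_{\ast}\circ i_{0,\ast} = \mathrm{id}_\mathcal{F}$.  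Hence $i_{0,\ast}$ is a split monomorphism of presheaves of spectra; its cofiber $\mathcal{C}:=\mathrm{cofib}(i_{0,\ast})$ is a natural retract of $\mathcal{G}$, and the statement reduces to showing that $C_{\ast}\mathcal{C}$ is weakly contractible.

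To verify the hypotheses of Lemma~1.1.5 for $\mathcal{C}$, I will construct the required $\mathbb{A}^1$-homotopy directly at the level of Waldhausen categories.  Let $\alpha\colon U\times\mathbb{A}^1_s\times X\times\mathbb{A}^1_t\to U\times\mathbb{A}^1_s\times X\times\mathbb{A}^1_t$ be the scaling morphism $(u,s,x,t)\mapsto(u,s,x,st)$ and let $\beta$ denote the projection that forgets the $\mathbb{A}^1_s$-factor.  For $M\in\mathcal{P}(U,X\times\mathbb{A}^1_t)$ set
\[
\tilde H(M) \,:=\, \alpha_{\ast}\beta^{\ast} M.
\]
The support $\mathrm{supp}(\beta^{\ast}M) = \mathbb{A}^1_s\times\mathrm{supp}(M)$ is finite over $U\times\mathbb{A}^1_s$, and $\alpha$ restricted to this support is finite, so $\tilde H(M)\in\mathcal{P}(U\times\mathbb{A}^1_s, X\times\mathbb{A}^1_t)$; flat base change identifies its direct image to $U\times\mathbb{A}^1_s$ with the pullback of $p_{\ast}M$, hence locally free.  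The construction is exact, natural in $U$, and preserves the subcategory of complexes supported on $X\times\{0\}$, so it descends to a natural morphism $H_U\colon\mathcal{C}(U)\to\mathcal{C}(U\times\mathbb{A}^1)$.

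At $s=1$ the map $\alpha$ restricts to the identity, so $i_1^{\ast}\tilde H(M) = M$; at $s=0$ it collapses $\mathbb{A}^1_t$ to $\{0\}$, giving $i_0^{\ast}\tilde H(M) \cong i_{0,\ast}p_{\ast}M$, a complex supported on $X\times\{0\}$ and hence representing zero in $\mathcal{C}(U)$ by the purity result (Proposition~1.4.7).  Thus the induced $H$ satisfies $i_1^{\ast}H = \mathrm{id}$ and $i_0^{\ast}H \equiv 0$, and Lemma~1.1.5 yields the weak contractibility of $C_{\ast}\mathcal{C}$, as required.  The main technical point will be verifying that $\alpha_{\ast}\beta^{\ast}$ is genuinely exact as a functor between the two Waldhausen categories in question; once the support condition is in hand, this reduces to the finiteness of $\alpha$ on $\mathrm{supp}(\beta^{\ast}M)$, which ensures that $\alpha_{\ast}$ preserves both the degreewise-split monomorphisms used as cofibrations and the quasi-isomorphisms used as weak equivalences.
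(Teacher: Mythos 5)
Your proposal is correct and follows essentially the same route as the paper: both reduce to showing that ($C_{\ast}$ of) the cofiber of $i_{0,\ast}$ is contractible and then apply the homotopy criterion of Lemma 1.1.5, with the homotopy given by exactly the same functor $\alpha_{\ast}\beta^{\ast}$ (in the paper written as the composite of restriction to the diagonal $\Delta_{\mathbb{A}^1}$ with pushforward along the multiplication $(s,t)\mapsto st$), checked to be the identity at $s=1$ and to factor through complexes supported on $X\times\{0\}$ at $s=0$. The only cosmetic differences are that you model the cofiber abstractly (noting the splitting by $p_{\ast}$) where the paper uses Waldhausen's relative $K$-theory construction, and you are somewhat more explicit about why $\alpha_{\ast}\beta^{\ast}$ lands in $\mathcal{P}(U\times\mathbb{A}^1,X\times\mathbb{A}^1)$.
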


\begin{proof}  We prove that the equivalence holds sectionwise, which
implies the weak equivalence of the stalks.

So, let $U$ be a scheme over $k$.  We need to prove that the
morphism of simplicial spectra $$\mathcal{K}(\mathcal{P}(U \times
\Delta^{\ast}, X)) \rightarrow \mathcal{K}(\mathcal{P}(U\times
\Delta^{\ast}, X \times
 \mathbb{A}^1))$$
is a weak equivalence of simplicial spectra.  First, it is
sufficient to show that the morphism of bisimplicial sets is a weak equivalence:

$$|w\mathcal{S}_{\ast}(U \times \Delta^{\ast},
X)| \rightarrow |w\mathcal{S}_{\ast}(U\times \Delta^{\ast}, X \times
 \mathbb{A}^1)|$$

Consider the fiber sequence of simplicial sets:

$$|w\mathcal{S}_{\ast}\mathcal{P}(U \times \Delta^{\ast},
X)| \rightarrow |w\mathcal{S}_{\ast}\mathcal{P}(U\times
\Delta^{\ast}, X \times
 \mathbb{A^1})| \rightarrow |w\mathcal{S}_{\ast}\mathcal{S}_{\ast}(\mathcal{P}(U, X) \rightarrow
 \mathcal{P}(U, X \times \mathbb{A}^1))|$$

Denote the presheaf $$U \rightarrow
|w\mathcal{S}_{\ast}\mathcal{S}_{\ast}(\mathcal{P}(U, X) \rightarrow
 \mathcal{P}(U, X \times \mathbb{A}^1))|$$ by $\mathcal{F}$.  We need to
show that $C_{\ast}\mathcal{F}$ is sectionwise contractible.  By
Lemma 2.5 it is sufficient to constuct a family of morphisms $$H_U:
\mathcal{F}(U) \rightarrow (C_{1}\mathcal{F})(U)=\mathcal{F}(U
\times \mathbb{A}^1)$$ functorial in $U$ such that

(1)  $i_1^{\ast} \circ H_U = id_{\mathcal{F}(U)}$

(2)  $i_0^{\ast} \circ H_U$ is homotopic to the constant map via a
homotopy natural in $U$

Consider the following two fibration sequences of simplicial sets:

$$|w\mathcal{S}_{\ast}\mathcal{P}(U, X)| \rightarrow |w\mathcal{S}_{\ast}\mathcal{P}(U, X \times \mathbb{A}^1)|
\rightarrow |w\mathcal{S}_{\ast}\mathcal{S}_{\ast}(\mathcal{P}(U, X)
\rightarrow
 \mathcal{P}(U, X \times \mathbb{A}^1))|$$

$$|w\mathcal{S}_{\ast}\mathcal{P}(U \times \mathbb{A}^1, X)| \rightarrow |w\mathcal{S}_{\ast}\mathcal{P}(U \times
\mathbb{A}^1, X \times \mathbb{A}^1)| \rightarrow
|w\mathcal{S}_{\ast} \mathcal{S}_{\ast}(\mathcal{P}(U \times
\mathbb{A}^1, X) \rightarrow
 \mathcal{P}(U \times \mathbb{A}^1, X \times \mathbb{A}^1))|$$

We need to construct a vertical arrow between the last terms of
these sequences.  According to Lemma 1, it is sufficient to
construct two functors:

$$\mathcal{P}(U, X) \rightarrow \mathcal{P}(U \times \mathbb{A}^1,
X)$$  and

$$\mathcal{P}(U, X \times \mathbb{A}^1) \rightarrow \mathcal{P}(U \times
\mathbb{A}^1, X \times \mathbb{A}^1)$$

making the diagram:

$$
\xymatrix{
\mathcal{P}(U, X)  \ar[d] \ar[r] & \mathcal{P}(U, X \times \mathbb{A}^1) \ar[d] \\
\mathcal{P}(U \times \mathbb{A}^1, X) \ar[r] & \mathcal{P}(U \times \mathbb{A}^1, X \times \mathbb{A}^1)
}
$$

commutative.

Let $\Delta_{\mathbb{A}^1}$ denote the subscheme of $$U \times
\mathbb{A}^1 \times (X \times \mathbb{A}^1) \times \mathbb{A}^1$$
consisting of the points $(u,t,x,s,t)$ and
$\delta_{\mathbb{A}^1}$ be the subscheme of $$U \times \mathbb{A}^1
\times X \times \mathbb{A}^1$$ consisting of points $(u,t,x,t)$.

We constuct the first vertical arrow as the composition of functors:

$$\mathcal{P}(U, X) \rightarrow \mathcal{P}(U \times \mathbb{A}^1,
X \times \mathbb{A}^1) \rightarrow \mathcal{P}(U \times
\mathbb{A}^1, X),$$ where the functor (1) is given by:

$$E^{\ast} \rightarrow i_{\delta_{\mathbb{A}^1} \ast} i_{\delta_{\mathbb{A}^1}}^{\ast}(E^{\ast} \times \mathcal{O}_{\mathbb{A}^1}
\times \mathcal{O}_{\mathbb{A}^1})$$

and the functor (2) by:

$$E^{\ast} \rightarrow (id_U \times id_{\mathbb{A}^1} \times id_X \times 0)_{\ast}E^{\ast}$$

The second vertical arrow is the composition of functors:

$$\mathcal{P}(U, X \times \mathbb{A}^1) \rightarrow \mathcal{P}(U \times
\mathbb{A}^1 \times \mathbb{A}^1, X \times \mathbb{A}^1) \rightarrow
\mathcal{P}(U \times \mathbb{A}^1, X \times \mathbb{A}^1),$$

where the functor (1) is given by:

$$E^{\ast} \rightarrow i_{\Delta_{\mathbb{A}^1} \ast} i_{\Delta_{\mathbb{A}^1}}^{\ast}(E^{\ast} \times \mathcal{O}_{\mathbb{A}^1}
\times \mathcal{O}_{\mathbb{A}^1})$$

and the functor (2) by:

$$E^{\ast} \rightarrow (id_U \times id_{\mathbb{A}^1} \times id_X \times \mu)_{\ast}E^{\ast},$$

where $$\mu: \mathbb{A}^1 \times \mathbb{A}^1 \rightarrow
\mathbb{A}^1$$

is the multiplication map:

$$(s,t) \rightarrow st.$$

It can be easily seen that these two vertical arrows make the diagram above
commutative.

To prove the claim we need to check that

(1) $i_1^{\ast}(\text{the second vertical arrow}) = id$ and

(2) $i_0^{\ast}(\text{the second vertical arrow})$ factors through
the category $\mathcal{P}(U \times \mathbb{A}^1, X)$

Let's prove the first claim.  We need to compare the stalks of
$$E^{n}$$ and $$i_1^{\ast}(id_U \times id_{\mathbb{A}^1} \times id_X
\times \mu)_{\ast}i_{\Delta_{\mathbb{A}^1} \ast}
i_{\Delta_{\mathbb{A}^1}}^{\ast}(E^{n} \times
\mathcal{O}_{\mathbb{A}^1} \times \mathcal{O}_{\mathbb{A}^1}).$$

The map $(id_U \times id_{\mathbb{A}^1} \times id_X \times \mu)
\circ i_{\Delta_{\mathbb{A}^1}}$ is given by:

$$(u,t,x,s,t) \rightarrow (u,t,x,st),$$ we see that it is an
isomorphism in the neighborhood of $(u,1,x,s,1)$, so the stalks of
the sheaf that we consider are indeed just the stalks of $E^n$.

Now, for the second claim, putting $t=0$ we get

$$(u,0,x,s,0) \rightarrow (u,0,x,0),$$ from where it is clear that
our functor factors through $\mathcal{P}(U \times \mathbb{A}^1, X)$.

\end{proof}

\begin{definition}
The category $\mathcal{M}^{fin}(S,X)$ is the full subcategory of the
category of bounded complexes in the category of big coherent
sheaves over $S \times X$, consisting of complexes $M^{\ast}$ such
that for every $i$, $M^i$ is supported on a finite union of
subschemes of $S \times X$ finite over $S$.
\end{definition}

\begin{proposition}
(\cite{Wa97})  Let $X = C_1 \times C_2 \times \dots \times C_n$, where
$C_i$ are smooth affine curves.  Then the obvious inclusion of
categories induces a weak equivalence of spectra:
$$\mathcal{K}(\mathcal{P}(S, X)) \rightarrow
\mathcal{K}(\mathcal{P}^{fin}(S, X)).$$
\end{proposition}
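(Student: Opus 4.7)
The natural approach is to invoke Theorem 1.9.8 of Thomason-Trobaugh, which the author already cites as a key tool in the introduction: it suffices to verify that the inclusion $\mathcal{P}(S,X)\hookrightarrow \mathcal{P}^{fin}(S,X)$ induces an equivalence of derived categories of bounded complexes. Since both categories are full subcategories (with the same notion of weak equivalence, namely quasi-isomorphism) of bounded complexes of big coherent sheaves, this reduces via a standard argument to showing that every object of $\mathcal{P}^{fin}(S,X)$ admits a quasi-isomorphism from an object of $\mathcal{P}(S,X)$. A spectral sequence / truncation argument further reduces this to resolving a single finite-over-$S$ coherent sheaf $M$ by a bounded complex of sheaves whose pushforwards to $S$ are locally free.

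I would prove the existence of such resolutions by induction on $n$. The base case $n=0$ is vacuous. The inductive step factors as: given $M$ on $S\times(C_1\times\cdots\times C_{n-1})\times C_n$, first resolve in the last factor relative to the base $S' = S\times C_1\times\cdots\times C_{n-1}$, thereby replacing $M$ by a bounded complex whose components have pushforward locally free over $S'$; then apply the inductive hypothesis over $S$ (this second step again uses Thomason-Trobaugh 1.9.8 in the opposite direction, via the fact that a locally free $\mathcal{O}_{S'}$-module is already in the image of the construction). Thus everything hinges on the one-curve case $X=C$.

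For a smooth affine curve $C$, given a finite-over-$S$ coherent sheaf $M$ on $S\times C$, one constructs a two-term resolution
\[
0 \to E_1 \to E_0 \to M \to 0
\]
with $E_0,E_1\in \mathcal{P}(S,C)$ as follows. Since $M$ has finite support over $S$, its pushforward $(p_S)_*M$ is a coherent $\mathcal{O}_S$-module equipped with an $\mathcal{O}_C$-action, i.e.\ an $\mathcal{O}_S$-algebra map $\mathcal{O}_C \to \mathcal{E}nd_{\mathcal{O}_S}((p_S)_*M)$. A finite free cover of $(p_S)_*M$ over $S$ can be promoted to a surjection $E_0\twoheadrightarrow M$ in which $E_0$ has locally free pushforward to $S$ and finite support over $S$ (enlarging the support slightly to accommodate the $\mathcal{O}_C$-action if necessary). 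The kernel $E_1$ is then a coherent sheaf on $S\times C$, finite over $S$, and its pushforward to $S$ is locally free, because $C$ is regular of dimension $1$ (so $\mathcal{O}_C$-modules have projective dimension $\le 1$), which translates via the finite projection $p_S$ into flatness — hence local freeness — of $(p_S)_*E_1$ over $\mathcal{O}_S$.

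The main obstacle is this last paragraph: constructing the surjection $E_0\twoheadrightarrow M$ with $E_0\in \mathcal{P}(S,C)$ in a sufficiently global way on $S$, and verifying that $(p_S)_*E_1$ is locally free. The condition of locally free pushforward is not preserved under arbitrary subquotients, so one must combine the affineness of support over $S$ (finiteness of $p_S$) with the one-dimensionality of $C$, and possibly work Zariski-locally on $S$ and glue using the fact that $C$ is affine (so vector bundles on $S\times C$ restricted from $S$ are abundant). This is essentially Walker's argument from \cite{Wa97}, and carrying it out carefully is where the proof's real content lies.
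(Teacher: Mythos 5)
The paper offers no proof of this proposition at all --- it is quoted verbatim as a result of Walker's \cite{Wa97} --- so there is nothing internal to compare against; I can only assess your argument on its own terms. Your reduction (Thomason--Trobaugh 1.9.8, hence an equivalence of derived categories, hence resolving a single coherent sheaf $M$ finite over $S$ by objects with locally free pushforward) is the right skeleton and matches how the paper uses 1.9.8 elsewhere.

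The genuine gap is in your base case: the asserted two-term resolution $0 \to E_1 \to E_0 \to M \to 0$ with $(p_S)_*E_1$ locally free does not exist in general, and the justification ``$C$ is regular of dimension $1$, so $\mathcal{O}_C$-modules have projective dimension $\le 1$'' is aimed at the wrong ring. What controls $(p_S)_*E_1$ is the projective dimension of $(p_S)_*M$ over $\mathcal{O}_S$, which is bounded by $\dim S$, not by $1$. Concretely, take $S=\mathbb{A}^2=\operatorname{Spec} k[x,y]$, $C=\mathbb{A}^1=\operatorname{Spec} k[t]$, and $M=k[x,y][t]/(x,y,t)$ the skyscraper at the origin: for any surjection $E_0\twoheadrightarrow M$ with $(p_S)_*E_0$ locally free, the kernel is a first syzygy of $k=k[x,y]/(x,y)$ over $k[x,y]$, which has projective dimension exactly $1$ and hence is never locally free over $S$. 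The one-dimensionality and affineness of $C$ are needed for a different purpose: to produce, locally on $S$, an $f\in\mathcal{O}(S\times C)$ vanishing on $\operatorname{Supp}M$ with $A=\mathcal{O}(S\times C)/(f^N)$ finite and locally free over $\mathcal{O}(S)$ (the analogue of a monic polynomial for $C=\mathbb{A}^1$). The correct resolution is then obtained by resolving $M$ by free $A$-modules, $\cdots\to A^{m_1}\to A^{m_0}\to M\to 0$, and truncating: since $S$ is regular of dimension $d$, the $d$-th syzygy is projective over $\mathcal{O}_S$ while still being an $A$-module, hence finite over $S$. This yields a resolution of length $d+1$ rather than $2$, which is all the derived-equivalence argument requires; for $X=C_1\times\cdots\times C_n$ one takes one such $f_i$ per factor and sets $A=\mathcal{O}(S\times X)/(f_1^{N},\dots,f_n^{N})$, so no induction on $n$ is needed either.
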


\subsubsection{Categories of sheaves with quasifinite support}

\begin{definition}
The category $\mathcal{P}^{qf}(S,X)$ is the full subcategory of the
category of bounded complexes in the category of big vector bundles
over $S \times X$, consisting of complexes $E^{\ast}$ such that for
every $i$, $H^i(E)$ is supported on a finite union of subschemes of
$S \times X$ quasifinite over $S$.
\end{definition}

It is straighforward to check:
\begin{proposition}
The category $\mathcal{P}^{qf}(S,X)$ has a structure of a saturated
biWaldhausen category with canonical homotopy pushouts and
canonical homotopy pullbacks.
\end{proposition}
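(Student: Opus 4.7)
The plan is to transport the standard biWaldhausen structure on bounded complexes of big vector bundles on $S \times X$ down to the full subcategory $\mathcal{P}^{qf}(S,X)$, and then check that the support condition on cohomology is preserved under all the operations that enter into the biWaldhausen axioms. Concretely, on the ambient category of bounded complexes of big vector bundles I take cofibrations to be degreewise split monomorphisms, fibrations to be degreewise split epimorphisms, and weak equivalences to be quasi-isomorphisms, with canonical homotopy pushouts and pullbacks given by the usual mapping cone / cocone constructions. This is the standard biWaldhausen structure in the sense of Thomason--Trobaugh \cite{TT88}, and saturation (the 2-out-of-3 property for quasi-isomorphisms) is automatic from the long exact sequence in cohomology.

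The first step is then purely formal: $\mathcal{P}^{qf}(S,X)$ inherits a zero object, is closed under direct sums, and contains all isomorphisms, so Cof 1--2 and Weq 1 pose no issue. The key verification is that $\mathcal{P}^{qf}(S,X)$ is closed under the operations demanded by Cof 3 and Weq 2, and under the formation of the canonical mapping cone and cocone. I would handle all of these at once via the following observation: if $f \colon E^{\ast} \to F^{\ast}$ is any morphism in $\mathcal{P}^{qf}(S,X)$, then for its mapping cone $\mathrm{Cone}(f)$ (resp. cocone) there is a long exact sequence
\[
\cdots \to H^{i}(E^{\ast}) \to H^{i}(F^{\ast}) \to H^{i}(\mathrm{Cone}(f)) \to H^{i+1}(E^{\ast}) \to \cdots
\]
whence $\mathrm{supp}\, H^{i}(\mathrm{Cone}(f))$ is contained in $\mathrm{supp}\, H^{i}(F^{\ast}) \cup \mathrm{supp}\, H^{i+1}(E^{\ast})$. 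Since a finite union of subschemes of $S \times X$ each quasifinite over $S$ is again quasifinite over $S$, the cone stays inside $\mathcal{P}^{qf}(S,X)$. The same argument shows that a pushout $F^{\ast} \cup_{E^{\ast}} G^{\ast}$ along a degreewise split monomorphism (whose cohomology fits into a Mayer--Vietoris type long exact sequence with the three cohomologies of $E^{\ast}, F^{\ast}, G^{\ast}$) remains in $\mathcal{P}^{qf}(S,X)$, and dually for pullbacks.

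Given this closure, Cof 3 follows because the pushout of a degreewise split monomorphism is again a degreewise split monomorphism (at the level of big vector bundles), and its cohomological support remains quasifinite by the bound above. Weq 2 follows from the five lemma applied to the long exact sequences of cohomology of the two pushout rows, exactly as in the ambient category. Saturation and the existence of canonical homotopy pushouts/pullbacks are then inherited from the ambient category, since the constructions witnessing them (mapping cylinder / path object built degreewise out of big vector bundles) never leave $\mathcal{P}^{qf}(S,X)$ by the same support bound.

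The only step that is not entirely formal is the support-preservation argument in the previous paragraph, and I expect the main obstacle to be purely bookkeeping: making sure that every construction used to witness the biWaldhausen axioms (pushouts, cones, cylinders, path objects) can be written in a way that visibly keeps the big vector bundle structure degreewise and produces cohomology whose support is controlled by a finite union of the supports of the cohomologies of the inputs. Once that is arranged, the proposition follows.
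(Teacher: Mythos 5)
Your proposal is correct and matches the paper's approach: the paper leaves this proposition as ``straightforward to check,'' but its proof of the parallel lemma for $\mathcal{M}^{qf}(S,X)$ uses exactly your argument, namely the explicit degreewise formula for the canonical homotopy pushout/pullback together with the long exact sequence of cohomology sheaves to see that the support condition is preserved. Your additional observation that cones and pushouts of degreewise split monomorphisms stay degreewise locally free is the only extra point needed for $\mathcal{P}^{qf}$ as opposed to $\mathcal{M}^{qf}$, and you handle it correctly.
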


\begin{definition}
The category $\mathcal{M}^{qf}(S,X)$ is the full subcategory of the
category of bounded complexes in the category of coherent sheaves
over $S \times X$, consisting of complexes $M^{\ast}$ such that for
every $i$, $H^i(M^{\ast})$ is supported on a finite union of
subschemes of $S \times X$ quasifinite over $S$.
\end{definition}
\begin{proposition}
The inclusion of categories $\mathcal{P}^{qf}(S,X) \rightarrow
\mathcal{M}^{qf}(S,X)$ induces an equivalence on $K$-theory spectra.
\end{proposition}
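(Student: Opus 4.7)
The plan is to deduce the result from Theorem 1.9.8 of \cite{TT88}, which asserts that an exact functor of complicial biWaldhausen categories closed under canonical homotopy pushouts and pullbacks which induces an equivalence on associated derived categories induces a weak equivalence of $K$-theory spectra. The previous proposition already supplies the required Waldhausen structure on $\mathcal{P}^{qf}(S,X)$; the analogous verification for $\mathcal{M}^{qf}(S,X)$, with degreewise split monomorphisms as cofibrations and quasi-isomorphisms as weak equivalences, is routine, since relaxing from big vector bundles to arbitrary big coherent sheaves does not disturb closure under cones, cylinders, or the relevant homotopy limits and colimits.

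The substantive input is the equivalence of derived categories $\mathcal{D}(\mathcal{P}^{qf}(S,X)) \to \mathcal{D}(\mathcal{M}^{qf}(S,X))$ induced by the inclusion. Full faithfulness is essentially formal: both sit as full triangulated subcategories of the bounded derived category of big coherent sheaves on $S \times X$ with cohomology supported quasifinitely over $S$, and the notion of quasi-isomorphism is the same on both sides, so Hom-sets after localization agree. For essential surjectivity one must show that every $M^{\ast} \in \mathcal{M}^{qf}(S,X)$ is quasi-isomorphic to an object of $\mathcal{P}^{qf}(S,X)$, i.e., admits a bounded resolution by big vector bundles. For this I would use that $S \times X$ is smooth of finite type over $k$, hence regular of bounded Krull dimension, so every coherent sheaf on it has a finite locally free resolution of length at most $\dim(S \times X)$; applying this termwise, e.g.\ via a Cartan-Eilenberg-style construction followed by truncation at a sufficiently high degree, gives a quasi-isomorphism $E^{\ast} \to M^{\ast}$ with $E^{\ast}$ a bounded complex of big vector bundles. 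The quasifinite support condition on cohomology transfers automatically, as quasi-isomorphism preserves cohomology sheaves.

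The main obstacle I anticipate is bookkeeping around the \emph{big} sheaf structure: the locally free resolution must be produced inside the category of big vector bundles, i.e., compatibly with the flat pullback isomorphisms $f^{\ast}E_V \cong E_U$ in the definition. Fortunately, a big coherent sheaf (resp.\ big vector bundle) is determined by its restriction to $S \times X$ together with the pullback data, so a locally free resolution chosen once on $S \times X$ pulls back to a locally free resolution on every $U \to S \times X$ automatically, making the construction functorial. The remaining checks, in the spirit of Walker's treatment in \cite{Wa97}, consist in confirming that the resulting morphism $E^{\ast} \to M^{\ast}$ really is a weak equivalence in $\mathcal{M}^{qf}(S,X)$ and that the inclusion satisfies the complicial biWaldhausen hypotheses of \cite{TT88} Theorem 1.9.8; once these are in hand the equivalence of $K$-theory spectra follows directly.
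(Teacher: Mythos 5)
Your proposal follows essentially the same route as the paper: both reduce the statement to Theorem 1.9.8 of \cite{TT88} and obtain the required equivalence of derived (homotopy) categories from regularity of $S \times X$ — the paper simply cites Quillen for that equivalence, whereas you spell out the finite locally free resolution argument (and the bookkeeping for big sheaves) that Quillen's result encapsulates. The argument is correct.
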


\begin{proof}
Since all the schemes in question are regular, the proof follows by \cite{TT88} Theorem 1.9.8 from the equivalence of $w^{-1}\mathcal{P}^{qf}(S,X)$ and $w^{-1}\mathcal{M}^{qf}(S,X)$, that is a well known result of Quillen (\cite{Qu73}).
\end{proof}

\begin{definition}
The category $\mathcal{M}(S,X)$ is the full subcategory of the
category of bounded complexes in the category of coherent sheaves
over $S \times X$, consisting of complexes $M^{\ast}$ such that for
every $i$, $M^i$ is supported on a finite union of subschemes of $S
\times X$ quasifinite over $S$.
\end{definition}

\begin{lemma}
Both categories $\mathcal{M}^{qf}(S,X)$ and $\mathcal{M}(S,X)$ have
the structure of biWaldhausen categories with canonical homotopy
pushouts and canonical homotopy pullbacks.
\end{lemma}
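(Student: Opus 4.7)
The plan is to install, on each of $\mathcal{M}^{qf}(S,X)$ and $\mathcal{M}(S,X)$, the standard Waldhausen structure on bounded complexes (cofibrations $=$ degreewise split monomorphisms, weak equivalences $=$ quasi-isomorphisms), and then observe that all the data demanded of a biWaldhausen category with canonical homotopy pushouts and pullbacks is internal to the ambient abelian category of bounded complexes of coherent sheaves on $S\times X$. Concretely: degreewise split monos and split epis, mapping cylinders, and mapping cocylinders for morphisms of complexes are defined by the usual formulas, and all of them are preserved by the inclusions of our two full subcategories into the ambient category once the support conditions are checked. So the bulk of the proof reduces to verifying stability of the two support conditions under the relevant constructions.

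The first step is to record the closure properties of the class of subschemes of $S\times X$ that are finite unions of schemes quasifinite over $S$: this class is closed under finite unions and under passage to closed subschemes, so for any coherent sheaf whose support lies in such a set, the same is true of any subsheaf, any quotient, and any extension. For $\mathcal{M}(S,X)$ (termwise condition) this immediately implies that cones, cylinders, cocylinders, pushouts along degreewise split monos, and pullbacks along degreewise split epis stay in the category, since their terms are finite direct sums and quotients of the terms of the inputs; hence Cof~1--3, Weq~1--2, and their duals are automatic, and the mapping cylinder/cocylinder formulas provide the canonical homotopy pushouts and pullbacks. For $\mathcal{M}^{qf}(S,X)$ (cohomological condition) the same sum-of-terms argument only controls the terms, not the cohomology; here I would instead invoke the long exact cohomology sequence attached to a distinguished triangle $A\to B\to C\to A[1]$: if two of the three have all cohomology sheaves supported on a finite union of quasifinite-over-$S$ subschemes, then the third has the same property, since its cohomology sheaves are extensions of subquotients of the outer ones. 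Applying this to the triangle defined by a mapping cone shows that the cone of any morphism in $\mathcal{M}^{qf}(S,X)$ remains in $\mathcal{M}^{qf}(S,X)$, from which the Waldhausen and biWaldhausen axioms follow.

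The only non-formal step, and the main place to be careful, is the transfer of the support condition through the long exact sequence of cohomology in the $\mathcal{M}^{qf}$-case; everything else is the general nonsense that bounded complexes of coherent sheaves, equipped with degreewise split monos and quasi-isomorphisms, form a biWaldhausen category with canonical mapping cylinders and cocylinders in the sense of \cite{TT88}. Once that step is in hand, the two Waldhausen structures are evidently dual to one another under $\mathrm{op}$, giving the biWaldhausen structure, and the cylinder/cocylinder functors restrict to our subcategories, giving the canonical homotopy pushouts and pullbacks.
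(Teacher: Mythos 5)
Your proposal is correct and follows essentially the same route as the paper: the termwise (direct-sum-of-shifted-terms) computation handles $\mathcal{M}(S,X)$, and the long exact sequence of cohomology sheaves attached to the mapping-cone/homotopy-pushout triangle gives the two-out-of-three support argument for $\mathcal{M}^{qf}(S,X)$. The extra framing via \cite{TT88}'s cylinder and cocylinder functors is consistent with what the paper leaves implicit, so no gap remains.
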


\begin{proof}
The case of $\mathcal{M}(S \times X)$ is obvious.  Indeed,
let $f: M^{\ast} \rightarrow M^{\prime \ast}$ and $g: M^{\ast}
\rightarrow M^{\prime \prime \ast}$ be morphisms of complexes in
$\mathcal{M}(S \times X)$.  By definition of the canonical
homotopy pushout, its terms are given by $$(M^{\prime \ast}
\cup_{M^{\ast}}^{h} M^{\prime \prime \ast})_n = M^{\prime n} \oplus
M^{n+1} \oplus M^{\prime \prime n},$$

so they all are supported on the union of supports of $M^{\prime
\ast}$, $M^{\ast}$, and $M^{\prime \prime \ast}$, which is a union
of a finite number of subschemes of $S \times X$ finite over $S$.

Now let $f: M^{\prime \ast} \rightarrow M^{\ast}$ and $g: M^{\prime
\prime \ast} \rightarrow M^{\ast}$ be morphisms of complexes in
$\mathcal{M}(S \times X)$.  Then the terms of the canonical
homotopy pullback are given by
$$(M^{\prime \ast} \times_{M^{\ast}}^{h} M^{\prime \prime \ast})_n = M^{\prime n} \oplus M^{n-1} \oplus M^{\prime \prime n},$$
and they have supports of the needed type.

We now consider the case of $\mathcal{M}^{qf}(S \times X))$.  Let
$f: M^{\ast} \rightarrow M^{\prime \ast}$ and $g: M^{\ast}
\rightarrow M^{\prime \prime \ast}$ be morphisms of complexes in
$\mathcal{M}^{qf}(S \times X)$.

We have an exact sequence of sheaves:

$$\dots \rightarrow H^{n}(M^{\prime \ast}) \oplus H^{n}(M^{\prime \prime \ast}) \rightarrow H^{n}(M^{\prime \ast} \cup_{M^{\ast}}^{h} M^{\prime \prime \ast}) \rightarrow H^{n+1}(M^{\ast}) \rightarrow \dots.$$

Let $M^{\ast}$ be acyclic outside the union of subschemes $Z_1, Z_2,
\dots Z_{k_1}$,

$M^{\prime \ast}$ be acyclic outside the union of subschemes
$Z_1^{\prime}, Z_2^{\prime}, \dots Z_{k_2}^{\prime}$,

$M^{\prime \prime \ast}$ be acyclic outside the union of subschemes
$Z_1^{\prime \prime}, Z_2^{\prime \prime}, \dots Z_{k_3}^{\prime
\prime}$.  We see from the exact sequence above that
$H^{n}(M^{\prime \ast} \cup_{M^{\ast}}^{h} M^{\prime \prime \ast})$
have zero stalks outside of the union of $$Z_1, Z_2, \dots Z_{k_1},
Z_1^{\prime}, Z_2^{\prime}, \dots Z_{k_2}^{\prime}, Z_1^{\prime
\prime}, Z_2^{\prime \prime}, \dots Z_{k_3}^{\prime \prime},$$ so
the complex $M^{\prime \ast} \cup_{M^{\ast}}^{h} M^{\prime \prime
\ast}$ is in the category $\mathcal{M}^{qf}(S \times X)$, as
desired.

The case of canonical homotopy pushouts is treated similarly using
the dual exact sequence of cohomology sheaves.

\end{proof}

\begin{proposition}
The inclusion of categories $\mathcal{M}(S,X) \rightarrow
\mathcal{M}^{qf}(S,X)$ induces an equivalence on $K$-theory spectra.
\end{proposition}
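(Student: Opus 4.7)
My plan follows the template of Proposition 1.3.17. By the preceding lemma both $\mathcal{M}(S,X)$ and $\mathcal{M}^{qf}(S,X)$ are biWaldhausen categories with canonical homotopy pushouts and pullbacks, so Theorem 1.9.8 of \cite{TT88} reduces the problem to showing that the inclusion induces an equivalence of derived categories $w^{-1}\mathcal{M}(S,X) \to w^{-1}\mathcal{M}^{qf}(S,X)$. Both sit as full subcategories of $D^b(\mathrm{Coh}(S\times X))$, and the inclusion $w^{-1}\mathcal{M}(S,X)\subseteq w^{-1}\mathcal{M}^{qf}(S,X)$ is immediate, since termwise support on a union of subschemes quasifinite over $S$ forces the same support condition on the cohomology sheaves. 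Fully faithfulness of the comparison functor is automatic because both localizations are realized as full subcategories of the same ambient derived category.

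For essential surjectivity, given $M^\ast \in \mathcal{M}^{qf}(S,X)$ with cohomology supported on a union $Z$ of quasifinite-over-$S$ subschemes, I would construct a quasi-isomorphic $N^\ast$ with every $N^i$ supported on $Z$ by induction on the cohomological amplitude of $M^\ast$. The amplitude-one case is trivial because $M^\ast$ is then quasi-isomorphic to $H^a(M^\ast)[-a]$, which lies in $\mathcal{M}(S,X)$. In general, let $a$ be the least degree with $H^a(M^\ast)\neq 0$ and consider the canonical truncation triangle
$$H^a(M^\ast)[-a]\longrightarrow M^\ast \longrightarrow \tau_{>a}M^\ast \longrightarrow H^a(M^\ast)[-a+1].$$
By induction $\tau_{>a}M^\ast$ is quasi-isomorphic to some $L^\ast\in\mathcal{M}(S,X)$; I would then represent the connecting morphism by an honest chain map $L^\ast\to H^a(M^\ast)[-a+1]$ and take $N^\ast$ to be its mapping cone, whose terms are direct sums of terms of $L^\ast$ and of $H^a(M^\ast)[-a]$ and so lie in $\mathcal{M}(S,X)$.

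The main obstacle is realizing this derived-category connecting morphism as an actual chain map between complexes in $\mathcal{M}(S,X)$, since a priori it is given only by a roof of quasi-isomorphisms in $D^b(\mathrm{Coh}(S\times X))$. This amounts to the classical equivalence $D^b(\mathrm{Coh}_{Z'}(S\times X)) \simeq D^b_{Z'}(\mathrm{Coh}(S\times X))$ for a sufficiently large infinitesimal thickening $Z'$ of $Z$ annihilating all of the $H^i(M^\ast)$, obtained by Gabriel-Quillen devissage applied to the Serre subcategory of coherent sheaves supported on $Z'$. With this in hand every roof between objects of $\mathcal{M}(S,X)$ can be replaced by a zigzag inside $\mathcal{M}(S,X)$, the inductive construction closes, and Theorem 1.9.8 of \cite{TT88} delivers the asserted weak equivalence of $K$-theory spectra.
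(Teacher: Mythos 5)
Your argument is correct and follows essentially the same route as the paper: reduce via Theorem 1.9.8 of \cite{TT88} to an equivalence of the homotopy categories $w^{-1}\mathcal{M}(S,X)\rightarrow w^{-1}\mathcal{M}^{qf}(S,X)$, take fully faithfulness as given, and prove essential surjectivity by induction on the length (amplitude) of $M^{\ast}$ using a truncation triangle whose third term is a single cohomology sheaf, which lies in $\mathcal{M}(S,X)$ by the quasifinite support hypothesis. The only real difference is that the paper sidesteps your ``main obstacle'' formally: since $w^{-1}\mathcal{M}^{qf}(S,X)$ is triangulated and the comparison functor is fully faithful, its essential image is closed under cones, so the connecting morphism never needs to be realized as an honest chain map and the d\'{e}vissage detour is unnecessary.
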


\begin{proof}
We checked that the categories in question satisfy the conditions of
\cite{TT88} Theorem 1.9.8.  So it is sufficient to show
that $F$ induces an equivalence of homotopy categories:

$$w^{-1}\mathcal{M}(S \times X) \rightarrow \mathcal{M}^{qf}(S \times X)$$

It is clear that the induced functor is fully faithful.  We need to
show that the functor is essentially surjective: that every complex
$M^{\ast}$ in $\mathcal{M}^{qf}(S \times X)$ is isomorphic in
$w^{-1}\mathcal{M}^{qf}(S \times X)$ to a complex
$\mathcal{M}(S \times X)$.

We show that by induction on the lenghth of the complex
$M^{\ast}$
=$$\dots \rightarrow M^{-2} \rightarrow M^{-1} \rightarrow M^0 \rightarrow 0 \rightarrow \dots,$$
where $M^0$ is in located in degree $0$.
  Consider the ``good truncation`` $\tau_{\leq -1}M^{\ast}$.  We have an inclusion of complexes:
$$\tau_{\leq -1}M^{\ast} \rightarrow M^{\ast}.$$  Denote its mapping cone $C^{\ast}$.  It is representented by the factor complex:

$$\dots \rightarrow 0 \rightarrow 0 \rightarrow M^{-1}/ker \delta^{-1} \rightarrow M^{0} \rightarrow 0 \rightarrow \dots,$$ which is quasiisomorphic to the complex: $$\dots \rightarrow 0 \rightarrow 0 \rightarrow M^0/ \delta^{-1} = H^0(M^{\ast}) \rightarrow 0 \rightarrow \dots,$$ concentrated in degree $0$.

By \cite{TT88} the category $w^{-1}\mathcal{M}^{qf}(S \times
X))$ is triangulated.  We have a distinguished triangle in
$w^{-1}\mathcal{M}^{qf}(S \times X)$:

$$\tau_{\leq -1}M^{\ast} \rightarrow M^{\ast} \rightarrow C^{\ast} \rightarrow \tau_{\leq -1}M^{\ast} [1].$$

The first term is isomorphic to an object in
$F(w^{-1}\mathcal{M}(S \times X))$ by inductive hypothesis,
and the same is true for the third term, because $H^0(M^{\ast})$ has
a support on a finite union of subschemes of $S \times X$ that are
quasifinite over $S$.

We conclude that the second term is also in the essential image of
$F$, which ends the proof.
\end{proof}

\subsection{Filtrations}

In this section we introduce the filtrations on the $K$-theory presheaf of spectra that we aim to compare.
\subsubsection{Grayson's filtration}  Our construction is the globalization of the following construction of Grayson (\cite{Gr95}).  
Let $X$ be a scheme.  Consider the closed embedding $X \times \{1\} \rightarrow X \times \mathbb{G}_m$ as a $1$-cube in 
the category of schemes.  Denote its $q$-th power, in the sense of \cite{Gr95}, by $(X \times \{1\} \rightarrow X \times \mathbb{G}_m)^{\times q}$.  By functoriality we obtain a $q$-cube
of Waldhausen categories $\mathcal{P}(X \times \{1\}) \rightarrow \mathcal{P}(X \times \mathbb{G}_m)^{\times q}$.
By 1.0.2.2 we associate to this $q$-cube a $q$-simplicial Waldhausen category, which is its iterated cofiber, and that we denote
  $(\mathcal{P}(X \times \{1\}) \rightarrow \mathcal{P}(X \times \mathbb{G}_m))^{\wedge q}$. Applying the $K$-functor to its $q$-fold
 diagonal we get a spectrum 

$$\mathcal{K}(\mathcal{P}(X \times \{1\}) \rightarrow \mathcal{P}(X \times \mathbb{G}_m)^{\wedge q})$$.

Now, by the functoriality properties of the category $\mathcal{P}(S, X)$ there is a presheaf of simplicial spectra:  
$$C_{\ast}\mathcal{K}(\mathcal{P}(\ast \times \{1\}) \rightarrow \mathcal{P}(\ast \times \mathbb{G}_m)^{\wedge q})$$

Now the $q$-th term of globalized Grayson's filtration is given by the $q$-fold delooping 

$$\mathcal{W}^q = \Omega^{-q}C_{\ast}\mathcal{K}(\mathcal{P}(\ast \times \{1\}) \rightarrow \mathcal{P}(\ast \times \mathbb{G}_m)^{\wedge q}).$$

The maps $\mathcal{W}^q \rightarrow \mathcal{W}^{q-1}$ are defined the same way as in \cite{Gr95}.

\subsubsection{Friedlander-Suslin filtration}  The globalized version of the construction of Friedlander and Suslin (\cite{FS}) is defined as follows.  Consider 
the presheaf $\mathcal{K}^q = C_{\ast}(\mathcal{P}^{qf}(\ast, \mathbb{A}^q))$.  The maps $\mathcal{K}^q \rightarrow \mathcal{K}^{q-1}$ induced 
by closed embeddings $\mathbb{A}^{q-1} \rightarrow \mathbb{A}^{q}$ give us a filtration on $\mathcal{K}$.

\chapter{Comparison of two filtrations}	% Chapter 2.

In this chapter we will prove that the two filtrations constructed in Section 1.4.2 are equivalent. The proof will consist of several steps.

\subsection{Step 1} We will need the globalized version 
of the filtration constructed by Walker in \cite{Wa97}.

The same way as when constructing Grayson's filtration we take the morphism $\{1\} \rightarrow \mathbb{P}^1$ as a starting point and obtain
presheaves of simplicial spectra:

$$C_{\ast}\mathcal{K}((\mathcal{P}(\ast, \{1\}) \rightarrow
\mathcal{P}(\ast, \mathbb{P}^1))^{\wedge q}).$$ These give us the terms of the filtration.  The
morphisms from  $q$-th term to the $q-1$-st term are constructed as
follows.The $q$-th term is constructed via a $q$-cube of exact categories,
the ${q-1}$-st is constructed via a $q-1$-cube.  We extend the
latter cube to a $q$-cube without changing the iterated cofiber by
adding an extra face consisting of trivial categories.

Now, Let $$i: X \times \mathbb{P}^1 \times \mathbb{P}^1 \times \dots
\times 0 \rightarrow X \times \mathbb{P}^1 \times \mathbb{P}^1
\times \dots \times \mathbb{P}^1$$ be the obvious closed inclusion.
Taking inverse image with respect to $i$ defines a morphism of
$q$-cubes that defines a functor on the iterated cofibers that we need. 

The following Proposition is an easy consequence of \cite{Wa97}.

\begin{proposition}
There is a weak equivalence of presheaves of spectra 
$$
\mathcal{W}^q \cong C_{\ast}\mathcal{K}((\mathcal{P}(\ast, \{1\}) \rightarrow
\mathcal{P}(\ast, \mathbb{P}^1))^{\wedge q}).
$$
\end{proposition}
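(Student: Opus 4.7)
The plan is to treat the case $q=1$ first and then promote to arbitrary $q$ by iterating the single-factor equivalence across each factor of the product $q$-cube. The key geometric input is that, for purposes of relative $K$-theory, the pair $(\mathbb{P}^1,\{1\})$ differs from $(\mathbb{G}_m,\{1\})$ by a single delooping; the $\Omega^{-q}$ appearing in the definition of $\mathcal{W}^q$ is designed precisely to compensate for $q$ copies of such a delooping.

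For $q=1$, I would apply the Mayer--Vietoris Proposition 1.4.1.6 to the standard open cover $\mathbb{P}^1 = U \cup V$ with $U, V \cong \mathbb{A}^1$ and $U \cap V \cong \mathbb{G}_m$. Combined with the $\mathbb{A}^1$-homotopy invariance Proposition 1.4.1.7, which collapses the presheaf $C_\ast \mathcal{K}(\mathcal{P}(\ast, \mathbb{A}^1))$ to $C_\ast \mathcal{K}(\mathcal{P}(\ast, \{1\}))$, the resulting homotopy cartesian square of presheaves of spectra degenerates and, on passing to the relative $K$-theory against the point $\{1\}$, produces a natural weak equivalence
\[
C_\ast \mathcal{K}\bigl(\mathcal{P}(\ast, \{1\}) \to \mathcal{P}(\ast, \mathbb{P}^1)\bigr) \;\simeq\; \Omega^{-1} C_\ast \mathcal{K}\bigl(\mathcal{P}(\ast, \{1\}) \to \mathcal{P}(\ast, \mathbb{G}_m)\bigr).
\]
This handles the $q=1$ case.

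For general $q$, the $q$-cubes entering both sides of the proposition are, by construction, $q$-fold products of the corresponding 1-cube with itself. The iterated cofiber construction of Section 1.2 is compatible with such products: applying $\mathcal{K}$ to the iterated cofiber of a product $q$-cube of Waldhausen categories yields a spectrum that is naturally identified with the $q$-fold smash product of the relative $K$-theories of the 1-cube factors. Smashing the $q=1$ equivalence with itself $q$ times therefore compounds the single delooping $\Omega^{-1}$ into the total delooping $\Omega^{-q}$, producing the weak equivalence asserted by the proposition.

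The principal obstacle is making precise the interaction between iterated cofibers of Waldhausen $q$-cubes and smash products of spectra, together with the careful bookkeeping of the $q$ individual shifts across the factors of the cube. All the essential ingredients, however, are already provided by Walker \cite{Wa97} and by the material collected in Section 1.4 (Mayer--Vietoris, homotopy invariance, and functoriality of the iterated cofiber under products), so the proof amounts to a direct assembly of those tools, as the statement ``easy consequence of \cite{Wa97}'' suggests.
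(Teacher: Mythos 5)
The paper offers no argument for this proposition beyond the citation of \cite{Wa97}: the statement \emph{is} (the globalized, $C_*$-applied form of) Walker's main comparison theorem. Your sketch does not reach it, because it stays entirely on one side of the comparison. The term $\mathcal{W}^q$ is built from the \emph{absolute} categories $\mathcal{P}(U\times\Delta^{\ast}\times\{1\})\to\mathcal{P}(U\times\Delta^{\ast}\times\mathbb{G}_m)$, i.e.\ from the ordinary $K$-theory presheaf $U\mapsto\mathcal{K}(U\times\Delta^{\ast}\times\mathbb{G}_m^{\times q})$, exactly as in Grayson's construction; the right-hand side is built from the \emph{bivariant} categories $\mathcal{P}(U\times\Delta^{\ast},-)$ of complexes supported finitely over the base. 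The Mayer--Vietoris and homotopy-invariance propositions of Section 1.4 that you invoke are statements about functoriality of $\mathcal{P}(S,X)$ in the second argument, so the $q=1$ step you describe yields an equivalence
$$C_{\ast}\mathcal{K}\bigl(\mathcal{P}(\ast,\{1\})\to\mathcal{P}(\ast,\mathbb{P}^1)\bigr)\;\simeq\;\Omega^{\pm 1}\,C_{\ast}\mathcal{K}\bigl(\mathcal{P}(\ast,\{1\})\to\mathcal{P}(\ast,\mathbb{G}_m)\bigr),$$
a $\mathbb{P}^1$-versus-$\mathbb{G}_m$ reduction carried out \emph{within} the bivariant theory. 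Nothing in your argument then bridges finite-support bivariant $K$-theory with the full $K$-theory of the product scheme, which is where $\mathcal{W}^q$ lives. That bridge --- showing that, after applying $C_{\ast}$ and sheafifying, complexes with support finite over $U\times\Delta^{\ast}$ suffice to compute the relative $K$-theory of $U\times\Delta^{\ast}\times\mathbb{G}_m^{\times q}$ rel the coordinate-one faces --- is the substantive content of Walker's theorem (a moving argument, not a formal consequence of descent and homotopy invariance), and it is precisely what the proposition asserts.

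A secondary issue: the iterated cofiber of a product $q$-cube of Waldhausen categories is not naturally the $q$-fold smash product of the relative $K$-theory spectra of its edges, and no external smash product of $K$-theory spectra is available here. The induction on $q$ has to be run by viewing the $q$-cube as a map of $(q-1)$-cubes and using that the iterated cofiber is independent of the order of iteration, as the paper does in Steps 2--4. That part of your plan is repairable bookkeeping; the missing bivariant-to-absolute comparison is not repairable with the tools you list and must be imported from \cite{Wa97}.
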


\subsection{Step 2}

In this section we prove the following

\begin{proposition} The morphism between the two filtrations
$$C_{\ast}\mathcal{K}((\mathcal{P}(\ast, \{1\}) \rightarrow
\mathcal{P}(\ast, \mathbb{P}^1))^{\wedge q})$$ and
$$C_{\ast}\mathcal{K}((\mathcal{P}(\ast, \mathbb{P}^1-0) \rightarrow
\mathcal{P}(\ast, \mathbb{P}^1))^{\wedge q})$$ induced by the obvious functors is a weak equivalence.
\end{proposition}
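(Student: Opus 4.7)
The plan is to proceed by induction on $q$, using the natural fibration sequences for iterated cofibers of $q$-cubes of Waldhausen categories (Waldhausen's relative $K$-theory fibration sequence) combined with the homotopy invariance proposition established above.

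For the base case $q=1$, Waldhausen's fibration sequence produces
\[
\mathcal{K}(\mathcal{P}(\ast,Y))\to\mathcal{K}(\mathcal{P}(\ast,\mathbb{P}^1))\to\mathcal{K}(\mathcal{P}(\ast,Y)\to\mathcal{P}(\ast,\mathbb{P}^1))
\]
for $Y=\{1\}$ and $Y=\mathbb{P}^1-0$. The morphism induced by $\{1\}\hookrightarrow\mathbb{P}^1-0$ yields a commutative ladder of such sequences in which the map on the middle term $\mathcal{K}(\mathcal{P}(\ast,\mathbb{P}^1))$ is the identity. Applying $C_{\ast}$ preserves these fibration sequences termwise, so the long exact sequence of sheaves of stable homotopy groups and the five-lemma reduce the claim to showing that $C_{\ast}\mathcal{K}(\mathcal{P}(\ast,\{1\}))\to C_{\ast}\mathcal{K}(\mathcal{P}(\ast,\mathbb{P}^1-0))$ is a weak equivalence. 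Identifying $\mathbb{P}^1-0\cong\mathbb{A}^1$ with $\{1\}$ a closed point (equivalent to the origin via an automorphism of $\mathbb{A}^1$), this is exactly the homotopy invariance in the second argument.

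For the inductive step $q>1$, I would view each of the two $q$-cubes as a $1$-cube of $(q-1)$-cubes along the last coordinate direction: the \emph{front face} has last coordinate $Y\in\{\{1\},\mathbb{P}^1-0\}$, and the \emph{back face} has last coordinate $\mathbb{P}^1$. The iterated cofiber of the full $q$-cube then sits in a natural fibration sequence whose outer terms are the iterated cofibers of these $(q-1)$-cubes, each carrying an extra fixed scheme factor attached to the ambient scheme. Strengthening the proposition to allow such an auxiliary scheme factor, the inductive hypothesis applied to both outer terms gives weak equivalences after $C_{\ast}$, and the five-lemma applied to the long exact sequences of sheaves of stable homotopy groups completes the step. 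The simultaneous change of cube structure and auxiliary factor on the front face is handled by factoring the morphism through an intermediate cube and invoking homotopy invariance for the factor change.

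The main obstacle is the combinatorial bookkeeping required to make the iterated cofiber fibration sequence precise for $q$-cubes of Waldhausen categories, to verify its naturality with respect to morphisms of cubes, and to set up the strengthened inductive statement so that the auxiliary scheme factors appearing on the faces can be absorbed smoothly. An alternative route bypassing much of this combinatorics is to analyze the cofiber $\mathcal{F}$ of the morphism directly: it inherits a $K_0$-presheaf of spectra structure from source and target, so by the lemma that $C_{\ast}$ preserves weak contractibility of $K_0$-presheaves of spectra it suffices to show that $\mathcal{F}$ is stalkwise contractible at Henselian local schemes. There one can apply Walker's Mayer--Vietoris for the covering $\mathbb{P}^1=\mathbb{A}^1\cup(\mathbb{P}^1-0)$, together with purity and homotopy invariance, to reduce the question iteratively back to the $q=1$ calculation.
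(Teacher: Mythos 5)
Your proposal is correct and follows essentially the same route as the paper: the base case compares the two cofiber sequences over the identity on $\mathcal{K}(\mathcal{P}(\ast,\mathbb{P}^1))$ and reduces to homotopy invariance for $\{1\}\hookrightarrow\mathbb{P}^1-0\cong\mathbb{A}^1$, and the inductive step is the paper's $(q+1)$-cube argument in disguise, since slicing each $q$-cube into $(q-1)$-cube faces and applying the five-lemma is the same as computing the iterated cofiber of the $(q+1)$-cube in a different order and observing that the other pair of faces is contractible by homotopy invariance (with the auxiliary $\mathbb{P}^1$ factors you correctly note must be carried along).
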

\begin{proof}:  Let $X$ be a henselian local scheme. We proceed by induction on $q$.

1.  $q$=1.  By homotopy invariance lemma, the vertical arrows in the
diagram:

$$
  \begin{CD}
      |\mathcal{K}(X \times \Delta^{\ast},\mathbb{P}^1-0)|          @>>>          |\mathcal{K}(X \times \Delta^{\ast},\mathbb{P}^1)|     \\
   @AAA                        @AAA  \\
      |\mathcal{K}(X \times \Delta^{\ast},\{1\})|          @>>>          |\mathcal{K}(X \times \Delta^{\ast},\mathbb{P}^1)|,
  \end{CD}
  $$

are weak equivalences.  Therefore the cofibers of the horizontal arrows are weakly equivalent as well. 

%Since the iterated cofiber does not depend
%on the choice of iteration of the cofibers, of the horizontal arrows
%are equivalent as well.

2.  Induction, using homotopy invariance in the second argument.  For the sake of simplicity of notation we
treat case $q=2$.  The general case is treated similarly.

Consider the following diagram:

$$
\xymatrix @!=5pc{
  & \mathcal{P}(X \times \Delta^{\ast},\mathbb{P}^1-0 \times \mathbb{P}^1-0) \ar[rr] \ar'[d][dd]
      &  & \mathcal{P}(X \times \Delta^{\ast},\mathbb{P}^1-0 \times \mathbb{P}^1) \ar[dd]        \\
  \mathcal{P}(X \times \Delta^{\ast},\{1\} \times \{1\}) \ar[ur]\ar[rr]\ar[dd]
      &  & \mathcal{P}(X \times \Delta^{\ast},\{1\} \times \mathbb{P}^1) \ar[ur]\ar[dd] \\
  & \mathcal{P}(X \times \Delta^{\ast},\mathbb{P}^1 \times \mathbb{P}^1-0) \ar'[r][rr]
      &  & \mathcal{P}(X \times \Delta^{\ast},\mathbb{P}^1 \times \mathbb{P}^1)                \\
  \mathcal{P}(X \times \Delta^{\ast},\mathbb{P}^1 \times \{1\}) \ar[rr]\ar[ur]
      &  & \mathcal{P}(X \times \Delta^{\ast},\mathbb{P}^1 \times \mathbb{P}^1) \ar[ur]        }
$$

Denote the face

$$
  \begin{CD}
      \mathcal{P}(X \times \Delta^{\ast},\mathbb{P}^1 \times \mathbb{P}^1)          @>>>          \mathcal{P}(X \times \Delta^{\ast},\mathbb{P}^1 \times \mathbb{P}^1)     \\
   @AAA                        @AAA  \\
      \mathcal{P}(X \times \Delta^{\ast},\{1\} \times \mathbb{P}^1)          @>>>           \mathcal{P}(X \times \Delta^{\ast},\mathbb{P}^1-0 \times \mathbb{P}^1),
  \end{CD}
  $$

by $\mathbf{face} \ 1$ and the face

$$
  \begin{CD}
      \mathcal{P}(X \times \Delta^{\ast},\mathbb{P}^1 \times \{1\})          @>>>          \mathcal{P}(X \times \Delta^{\ast},\mathbb{P}^1 \times \mathbb{P}^1-0)     \\
   @AAA                        @AAA  \\
      \mathcal{P}(X \times \Delta^{\ast},\{1\} \times \{1\})          @>>>           \mathcal{P}(X \times \Delta^{\ast},\mathbb{P}^1-0 \times \mathbb{P}^1-0),
  \end{CD}
  $$

  by $\mathbf{face} \ 2$.

  We have cofiber sequence:

$$\mathcal{K}(iter. \ cofib. \mathbf{face} \ 1) \rightarrow \mathcal{K}(iter. \ cofib. \mathbf{face} \ 2)
\rightarrow \mathcal{K}(iter.\  cofib.\ of\  3-\mathbf{cube}).$$

The first two spectra are contractible by homotopy invariance lemma.
Again, since the iterated cofiber does not depend on the choice of
iteration, we obtain the equivalence of $K$-theory of the front and
the rear faces of the 3-cube, which was our objective.

\end{proof}

\subsection{Step 3}

In this section we prove

\begin{proposition}

$$C_{\ast}\mathcal{K}((\mathcal{P}(\ast, \mathbb{P}^1-0) \rightarrow
\mathcal{P}(\ast, \mathbb{P}^1))^{\wedge q})$$ and

$$C_{\ast}\mathcal{K}((\mathcal{P}(\ast, \mathbb{A}^1-0) \rightarrow
\mathcal{P}(\ast, \mathbb{A}^1))^{\wedge q})$$ are weakly equivalent.
\end{proposition}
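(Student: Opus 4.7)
The idea is to interpolate between the two $q$-cubes by switching a single factor at a time from the pair $(\mathbb{A}^1-0,\mathbb{A}^1)$ to $(\mathbb{P}^1-0,\mathbb{P}^1)$, and to invoke the Mayer--Vietoris homotopy cartesian square from Section 1.4.1 (parametrized by a product of copies of $\mathbb{A}^1-0$, $\mathbb{A}^1$, $\mathbb{P}^1-0$, $\mathbb{P}^1$) to show that each switch preserves the iterated cofiber up to stalkwise weak equivalence.

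Concretely, for $0 \le k \le q$ let $\mathcal{C}_k$ be the $q$-cube of (presheaves of) Waldhausen categories whose vertex at $(i_1,\ldots,i_q) \in \{0,1\}^q$ is $\mathcal{P}(\ast, Z^{(k)}_{1,i_1} \times \cdots \times Z^{(k)}_{q,i_q})$, where $(Z^{(k)}_{j,0}, Z^{(k)}_{j,1})$ equals $(\mathbb{P}^1-0,\mathbb{P}^1)$ if $j \le k$ and $(\mathbb{A}^1-0,\mathbb{A}^1)$ if $j > k$. Then $\mathcal{C}_0$ and $\mathcal{C}_q$ give exactly the iterated cofibers appearing in the two lines of the statement, so it suffices to produce a weak equivalence $C_\ast \mathcal{K}(\mathcal{C}_k) \simeq C_\ast \mathcal{K}(\mathcal{C}_{k+1})$ for each $0 \le k < q$.

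To get the step from $k$ to $k+1$, I would form a $(q+1)$-cube $\mathcal{D}_k$ of presheaves of Waldhausen categories by keeping the $1$-cube of $\mathcal{C}_k$ in every position $j \ne k+1$, while replacing the $1$-cube at position $k+1$ with the full $2$-cube of schemes
\[
\begin{CD}
\mathbb{A}^1-0 @>>> \mathbb{A}^1 \\
@VVV @VVV \\
\mathbb{P}^1-0 @>>> \mathbb{P}^1.
\end{CD}
\]
Slicing $\mathcal{D}_k$ along the extra Mayer--Vietoris direction recovers $\mathcal{C}_k$ on one side and $\mathcal{C}_{k+1}$ on the other, so collapsing that extra direction last exhibits the iterated cofiber of $\mathcal{D}_k$ as the mapping cone of the natural map $C_\ast \mathcal{K}(\mathcal{C}_k) \to C_\ast \mathcal{K}(\mathcal{C}_{k+1})$. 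On the other hand, collapsing the two directions at position $k+1$ first produces, for each fixing of the remaining $q-1$ indices, a $2$-subcube of the form $\mathcal{P}(\ast, W \times -)$ applied to the above $2$-cube of schemes, where $W$ is a smooth product of copies of $\mathbb{A}^1-0$, $\mathbb{A}^1$, $\mathbb{P}^1-0$, $\mathbb{P}^1$. This is precisely the Mayer--Vietoris square for the cover $\{W \times \mathbb{A}^1,\, W \times (\mathbb{P}^1-0)\}$ of $W \times \mathbb{P}^1$, so by the Mayer--Vietoris proposition it becomes homotopy cartesian on every henselian local stalk and its total cofiber is stalkwise zero. The iterated cofiber of the remaining $(q-1)$-cube (with stalkwise contractible vertices) is then zero, forcing the mapping cone above to be trivial; composing the $q$ resulting equivalences yields the proposition.

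The main obstacle is to justify that the iterated cofiber of $\mathcal{D}_k$ can indeed be computed in either of the two orders used above -- a property the author invokes without comment in the proof of Proposition 2.2.1, and which in our setting rests on the Waldhausen iterated-cofiber formalism of Section 1.2.2 together with the stable-homotopy-theoretic fact that a $2$-subcube with vanishing total cofiber contributes trivially when absorbed into the iterated cofiber of a larger cube. Once this order-independence is in place, the remaining bookkeeping (checking that the parametrized square really is the Mayer--Vietoris square and that all inclusions between the various subsets of $\mathbb{P}^1$ are functorial on $\mathcal{P}(\ast, -)$) is routine.
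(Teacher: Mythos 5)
Your proof is correct, and it follows the paper's overall strategy --- enlarge the $q$-cube to a larger cube whose extra faces are Mayer--Vietoris squares for the cover of $\mathbb{P}^1$ by $\mathbb{A}^1$ and $\mathbb{P}^1-0$, then use order-independence of the iterated cofiber to trade a trivial pair of faces for an equivalence of the complementary pair --- but your decomposition is genuinely different and, as it happens, more robust. The paper passes from the $\mathbb{A}^1$-cube to the $\mathbb{P}^1$-cube in a single step, via one $(q+1)$-cube whose interpolating direction replaces \emph{all} $q$ coordinates by their projective versions simultaneously, and then asserts that the top and bottom faces are ``Cech'' diagrams. As literally drawn for $q=2$ this fails: $(\mathbb{A}^1-0)\times\mathbb{A}^1$ and $(\mathbb{P}^1-0)\times(\mathbb{P}^1-0)$ do not cover $(\mathbb{P}^1-0)\times\mathbb{P}^1$ (the point $(\infty,0)$ lies in neither), so those faces are not the Mayer--Vietoris squares of any open cover. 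Your one-coordinate-at-a-time interpolation through the intermediate cubes $\mathcal{C}_0,\dots,\mathcal{C}_q$ costs $q$ steps instead of one, but in each step the relevant $2$-subcubes really are the Mayer--Vietoris squares for the cover of $W\times\mathbb{P}^1$ by $W\times\mathbb{A}^1$ and $W\times(\mathbb{P}^1-0)$, so Walker's homotopy-cartesian proposition applies verbatim on henselian local stalks. You also correctly isolate the one formal input both arguments need --- that the total cofiber of the $(q+1)$-cube can be computed in either order, so that a direction along which all $2$-subcubes have vanishing total cofiber forces the two complementary $q$-faces to have equivalent iterated cofibers; this is exactly the property the paper invokes without comment, and it does hold in the stable setting.
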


\begin{proof}

We use induction on $q$.

$q=1$:  By the Mayer-Vietoris the square

$$
  \begin{CD}
      |\mathcal{K}(X \times \Delta^{\ast},\mathbb{P}^1-\infty)|          @>>>          |\mathcal{K}(X \times \Delta^{\ast},\mathbb{P}^1)|     \\
   @AAA                        @AAA  \\
      |\mathcal{K}(X \times \Delta^{\ast},\mathbb{P}^1-\infty - 0)|          @>>>          |\mathcal{K}(X \times \Delta^{\ast},\mathbb{P}^1-0)|,
  \end{CD}
  $$

is homotopy cartesian for every henselian local scheme $X$.  But it
is just another way of writing the square:

$$
  \begin{CD}
      |\mathcal{K}(X \times \Delta^{\ast},\mathbb{A}^1)|          @>>>          |\mathcal{K}(X \times \Delta^{\ast},\mathbb{P}^1)|     \\
   @AAA                        @AAA  \\
      |\mathcal{K}(X \times \Delta^{\ast},\mathbb{A}^1- 0)|          @>>>          |\mathcal{K}(X \times \Delta^{\ast},\mathbb{P}^1-0)|,
  \end{CD}
  $$

$q \geq 2$.  Again, we treat the case $q=2$ for simplicity of notation, the general case is treated similarly.  Consider the diagram
of exact categories and exact functors:

$$\xymatrix @!=5pc{
  &  \mathcal{P}(X \times \Delta^{\ast},\mathbb P^1-0\times \mathbb P^1-0)  \ar[rr] \ar'[d][dd]
      &  &  \mathcal{P}(X \times \Delta^{\ast},\mathbb P^1-0\times \mathbb P^1)  \ar[dd]        \\
   \mathcal{P}(X \times \Delta^{\ast},\mathbb A^1-0\times \mathbb A^1-0)  \ar[ur]\ar[rr]\ar[dd]
      &  &  \mathcal{P}(X \times \Delta^{\ast},\mathbb A^1-0\times \mathbb A^1 )  \ar[ur]\ar[dd] \\
  &  \mathcal{P}(X \times \Delta^{\ast},\mathbb P^1\times \mathbb P^1-0)  \ar'[r][rr]
      &  &  \mathcal{P}(X \times \Delta^{\ast},\mathbb P^1\times \mathbb P^1)                 \\
   \mathcal{P}(X \times \Delta^{\ast},\mathbb A^1\times \mathbb A^1-0) \ar[rr]\ar[ur]
      &  &  \mathcal{P}(X \times \Delta^{\ast},\mathbb A^1\times \mathbb A^1)  \ar[ur]        }
$$

Note that the top and bottom squares are ``Cech'' diagrams, and therefore the iterated cofiber of the cube is trivial. Thus, the cofibers of the front and of the rear squares are weakly equivalent, as we wanted to prove.

\end{proof}

\subsection{Step 4}

As in the previous section, let $X$ be a henselian local scheme.  Here we prove

\begin{proposition} $$C_{\ast}\mathcal{K}((\mathcal{P}(\ast,
\mathbb{A}^1-0) \rightarrow \mathcal{P}(\ast,
\mathbb{A}^1))^{\wedge q})$$ and
$$C_{\ast}\mathcal{K}(\mathcal{P}(\ast, \mathbb{A}^q-0) \rightarrow
\mathcal{P}(\ast, \mathbb{A}^q))$$ are weakly equivalent.

\end{proposition}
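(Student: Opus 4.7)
The strategy parallels Steps 2 and 3: construct a $(q+1)$-cube of Waldhausen categories whose iterated cofiber can be computed in two different ways, yielding the desired weak equivalence. For $i = 1, \ldots, q$, let $U_i := \mathbb{A}^{i-1} \times (\mathbb{A}^1 - 0) \times \mathbb{A}^{q-i} \subset \mathbb{A}^q$, so that $\bigcup_{i=1}^q U_i = \mathbb{A}^q - 0$ and, for any $I \subseteq \{1, \ldots, q\}$, $\bigcap_{i \in I} U_i = \prod_{i \in I}(\mathbb{A}^1 - 0) \times \prod_{i \notin I}\mathbb{A}^1$ (with the convention $\bigcap_{\emptyset} U_i = \mathbb{A}^q$). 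The $(q+1)$-cube has vertices indexed by pairs $(I, \epsilon)$ with $I \subseteq \{1, \ldots, q\}$ and $\epsilon \in \{0,1\}$: place $\mathcal{P}(\ast, \bigcap_{i \in I} U_i)$ at $(I, \epsilon)$ whenever $I \neq \emptyset$ or $\epsilon = 1$, and place $\mathcal{P}(\ast, \mathbb{A}^q - 0)$ at $(\emptyset, 0)$. Arrows in the first $q$ directions are the direct images along the obvious open inclusions; arrows in the $(q+1)$-st direction are identities at every vertex except the edge $(\emptyset, 0) \to (\emptyset, 1)$, where one takes the direct image $\mathcal{P}(\ast, \mathbb{A}^q - 0) \to \mathcal{P}(\ast, \mathbb{A}^q)$. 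Strict commutativity is automatic, since direct images along composable open immersions compose correctly. The $\epsilon = 1$ face is Grayson's $q$-cube, whose iterated cofiber is the left-hand side of the proposition; the $\epsilon = 0$ face is the \v{C}ech $q$-cube of the open cover $\mathbb{A}^q - 0 = \bigcup_i U_i$.

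The key technical input is an iterated Mayer-Vietoris lemma: after applying $C_\ast \mathcal{K}$ and restricting to henselian local schemes, the iterated cofiber of the $\epsilon = 0$ face is weakly contractible. I would prove this by induction on $q$. The base case $q = 2$ is exactly Proposition 1.3.5. The inductive step uses the two-open decomposition $\mathbb{A}^q - 0 = \bigl(\bigcup_{i < q} U_i\bigr) \cup U_q$: Proposition 1.3.5 applied to this cover, combined with the inductive hypothesis applied to the \v{C}ech $(q-1)$-cubes for the refined covers of $\bigcup_{i < q} U_i$ and of $U_q \cap \bigcup_{i < q} U_i$, produces the desired vanishing. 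This is essentially Zariski descent derived formally from the two-open Mayer-Vietoris property.

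Given the lemma, compute the iterated cofiber of the $(q+1)$-cube in two ways. Slicing along the $\epsilon$-direction first yields the cofiber of the map from the iterated cofiber of the $\epsilon = 0$ face to that of the $\epsilon = 1$ face; by the lemma the former is $0$, so this cofiber is simply the iterated cofiber of Grayson's cube, i.e.\ the LHS. Taking $\epsilon$-cofibers at each vertex first, every vertex cofiber vanishes except at $(\emptyset, 0) \to (\emptyset, 1)$, where it is $\mathcal{K}(\mathcal{P}(\ast, \mathbb{A}^q - 0) \to \mathcal{P}(\ast, \mathbb{A}^q))$, the RHS. The resulting $q$-cube has a single nonzero vertex located at the terminal position $I = \emptyset$, and an elementary induction identifies its iterated cofiber with that vertex. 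Equating the two computations yields LHS $\simeq$ RHS.

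The main obstacle is the iterated Mayer-Vietoris lemma. Although formally routine given Proposition 1.3.5, its inductive step requires careful organization of the \v{C}ech $q$-cube as a Mayer-Vietoris square of \v{C}ech $(q-1)$-cubes for the two pieces of the decomposition $\mathbb{A}^q - 0 = \bigl(\bigcup_{i < q} U_i\bigr) \cup U_q$, together with tracking the combinatorial compatibility between the original cube and this decomposition.
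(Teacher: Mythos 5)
Your proposal is correct and is essentially the paper's own argument: the paper builds exactly this $(q+1)$-cube (written out for $q=2$), with the \v{C}ech face of the cover of $\mathbb{A}^q-0$ killed by Walker's Mayer--Vietoris property and the orthogonal slicing leaving only the terminal vertex cofiber $\mathcal{K}(\mathcal{P}(\ast,\mathbb{A}^q-0)\rightarrow\mathcal{P}(\ast,\mathbb{A}^q))$. The only organizational difference is that the paper's induction on $q$ peels off one coordinate at a time, so that only the two-open Mayer--Vietoris square is ever invoked, whereas you take the full $q$-element cover in one step and therefore must separately establish the iterated \v{C}ech vanishing lemma --- which you then prove by the same peeling induction, so the two routes coincide in substance.
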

\begin{proof}
Again, we use induction on $q$.  The case $q=1$ is tautological,
let's treat the case $q=2$.  Consider the following cube:

$$
\xymatrix @!=5pc{
  & \mathcal{P}(X \times \Delta^{\ast},\mathbb A^1-0\times \mathbb A^1-0) \ar[rr] \ar'[d][dd]
      &  & \mathcal{P}(X \times \Delta^{\ast},\mathbb A^1-0\times \mathbb A^1) \ar[dd]        \\
  \mathcal{P}(X \times \Delta^{\ast},\mathbb A^1-0\times \mathbb{A}^1-0) \ar[ur]\ar[rr]\ar[dd]
      &  & \mathcal{P}(X \times \Delta^{\ast},\mathbb A^1-0\times \mathbb{A}^1) \ar[ur]\ar[dd] \\
  & \mathcal{P}(X \times \Delta^{\ast},\mathbb A^1\times \mathbb A^1-0) \ar'[r][rr]
      &  & \mathcal{P}(X \times \Delta^{\ast},\mathbb A^1\times \mathbb A^1)                \\
  \mathcal{P}(X \times \Delta^{\ast},\mathbb A^1\times \mathbb A^1-0) \ar[rr]\ar[ur]
      &  & \mathcal{P}(X \times \Delta^{\ast},\mathbb A^2-0) \ar[ur]        }
$$

Here the front face is the "Cech" cube of the cover of
$\mathbb{A}^2-0$ by $\mathcal{U}_0= (\mathbb{A}^1-0) \times
\mathbb{A}^1$ and $\mathcal{U}_1=\mathbb{A}^1 \times
(\mathbb{A}^1-0)$.

The $K$-theory of the front face is contractible, so the $K$-theory
of the iterated cofiber is the same as $K$-theory of the iterated
cofiber of the rear face, which is
$$C_{\ast}\mathcal{K}((\mathcal{P}(X, \mathbb{A}^1-0)
\rightarrow \mathcal{P}(X, \mathbb{A}^1))^{\wedge 2}).$$

On the other hand $K$-theory of the top is contractible , so the
$K$-theory of the iterated cofiber of the 3-cube is the same as the
$K$-theory of the bottom, which is
$$C_{\ast}\mathcal{K}(\mathcal{P}(X, \mathbb{A}^2-0)
\rightarrow \mathcal{P}(X, \mathbb{A}^2)),$$ so the two
spectra are equivalent.
\end{proof}
\subsection{Step 5} Let $X$ be a scheme over $k$.  Denote $\mathcal{P}^{qf, X \times (\mathbb{A}^q - 0)}(X,
\mathbb{A}^q)$ the full subcategory of $\mathcal{P}^{qf}(X,
\mathbb{A}^q)$ whose objects are complexes with cohomology supported
on $X\times (\mathbb{A}^q - 0)$.  The goal of this section is
proving 
\begin{proposition}
$$C_{\ast}\mathcal{K}(\mathcal{P}(\ast,
\mathbb{A}^q-0) \rightarrow \mathcal{P}(\ast, \mathbb{A}^q))$$

and $$C_{\ast}\mathcal{K}(\mathcal{P}^{qf, \ast \times (\mathbb{A}^q
- 0)}(\ast, \mathbb{A}^q) \rightarrow \mathcal{P}^{qf}(\ast,
\mathbb{A}^q))$$ are weakly equivalent.
\end{proposition}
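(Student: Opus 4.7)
The plan is to build a zig-zag of strictly commutative squares of Waldhausen categories connecting the pair $(\mathcal{P}(\ast,\mathbb{A}^q-0)\to\mathcal{P}(\ast,\mathbb{A}^q))$ to the pair $(\mathcal{P}^{qf,\ast\times(\mathbb{A}^q-0)}(\ast,\mathbb{A}^q)\to\mathcal{P}^{qf}(\ast,\mathbb{A}^q))$, with intermediate stages passing through $\mathcal{M}^{fin}(\ast,\mathbb{A}^q)$, $\mathcal{M}(\ast,\mathbb{A}^q)$ and $\mathcal{M}^{qf}(\ast,\mathbb{A}^q)$ together with their analogues cut out by requiring the terms (resp.\ cohomology sheaves) to be supported on $\ast\times(\mathbb{A}^q-0)$. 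Each square yields, by the morphism lemma of Section 1.2.1, a morphism of relative $K$-theory spectra and hence of the associated $C_{\ast}\mathcal{K}$-presheaves. The horizontal arrows in the support-restricted squares are identified, via extension by zero along the open immersion $j:\mathbb{A}^q-0\hookrightarrow\mathbb{A}^q$, with the corresponding categories over $\mathbb{A}^q-0$; this is legitimate because every such term or cohomology sheaf is supported on a subscheme finite, hence proper, over the base, and therefore closed in $\ast\times\mathbb{A}^q$.

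With this zig-zag in place, the statement reduces to showing that every vertical arrow is a weak equivalence of presheaves of spectra after applying $C_{\ast}\mathcal{K}$. For the ambient column over $\mathbb{A}^q$: $\mathcal{P}(\ast,\mathbb{A}^q)\to\mathcal{M}^{fin}(\ast,\mathbb{A}^q)$ is the equivalence supplied by Walker's proposition (applicable because $\mathbb{A}^q$ is a product of smooth affine curves), while $\mathcal{M}(\ast,\mathbb{A}^q)\simeq\mathcal{M}^{qf}(\ast,\mathbb{A}^q)\simeq\mathcal{P}^{qf}(\ast,\mathbb{A}^q)$ are the two propositions proved just before Section 1.4.2, both built on \cite{TT88} 1.9.8. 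The support-restricted column is handled by re-running these arguments inside the full subcategory picking out the prescribed support condition: the resolutions employed in Walker's and in Thomason--Trobaugh's proofs do not enlarge supports, so each equivalence restricts.

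The principal obstacle is the inclusion $\mathcal{M}^{fin}\hookrightarrow\mathcal{M}$, which is not a $K$-theory equivalence in absolute terms but should become one after $C_{\ast}$. My plan here is to invoke the $K_0$-presheaf formalism extended in Section 1.3: the cofiber $\mathcal{F}$ of this inclusion carries a natural $K_0$-presheaf structure on each $\pi_i^s$, and on a henselian local $U$ a quasifinite subscheme of $U\times\mathbb{A}^q$ splits as the disjoint union of a part finite over $U$ and a part whose closed fibre is empty. This decomposition should force $(\pi_i^s\mathcal{F})_{Nis}=0$, so that $\mathcal{F}$ is weakly contractible as a $K_0$-presheaf of spectra; Lemma 1.3.2 then delivers sectionwise contractibility of $C_{\ast}\mathcal{F}$ on henselian local schemes, completing the step and hence the proof.
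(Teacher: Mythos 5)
Your overall route coincides with the paper's: reduce by the equivalences of Section 1.4.1 to the categories $\mathcal{M}^{fin}$ and $\mathcal{M}^{qf}$, exploit the canonical decomposition $Z=Z^{+}\coprod Z^{-}$ of a quasifinite scheme over a henselian local base, and pass through $C_{\ast}$ via the lemma on weakly contractible $K_0$-presheaves of spectra. But your treatment of the ``principal obstacle'' contains a genuine error. The cofiber $\mathcal{F}$ of $\mathcal{K}(\mathcal{M}^{fin}(\ast,\mathbb{A}^q))\to\mathcal{K}(\mathcal{M}(\ast,\mathbb{A}^q))$ does \emph{not} satisfy $(\pi_i^s\mathcal{F})_{Nis}=0$. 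On a henselian local $U$ the splitting $Z=Z^{+}\coprod Z^{-}$ yields a product decomposition $\mathcal{K}(\mathcal{M}(U,\mathbb{A}^q))\simeq\mathcal{K}(\mathcal{M}^{fin}(U,\mathbb{A}^q))\times\mathcal{K}(\mathcal{M}^{-}(U,\mathbb{A}^q))$, where $\mathcal{M}^{-}$ consists of complexes supported on quasifinite subschemes whose image misses the closed point; hence $\mathcal{F}(U)\simeq\mathcal{K}(\mathcal{M}^{-}(U,\mathbb{A}^q))$, which is not contractible (for $U$ the henselization of a point of a curve with generic point $\Spec F$, the category $\mathcal{M}^{-}(U,\mathbb{A}^q)$ contains the skyscraper sheaves at closed points of $\mathbb{A}^q_F$, so already $\pi_0$ is nonzero). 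Thus the inclusion $\mathcal{M}^{fin}\hookrightarrow\mathcal{M}$ is not a stalkwise $K$-theory equivalence, and the $K_0$-presheaf lemma cannot be applied to this cofiber; applying $C_{\ast}$ does not rescue it, since that lemma needs the stalkwise vanishing \emph{before} $C_{\ast}$.

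What saves the argument --- and what the paper actually does --- is that the offending factor $\mathcal{K}(\mathcal{M}^{-})$ occurs \emph{identically} in both columns of your square: the condition ``supported on $\ast\times(\mathbb{A}^q-0)$'' is vacuous on the $Z^{-}$-part, because any subscheme meeting $X\times 0$ has the closed point of $X$ in its image. One must therefore decompose the \emph{relative} construction: $S_{\ast}(\mathcal{M}^{qf,\,X\times(\mathbb{A}^q-0)}(X,\mathbb{A}^q)\to\mathcal{M}^{qf}(X,\mathbb{A}^q))$ splits as the product of $S_{\ast}(\mathcal{M}^{fin}(X,\mathbb{A}^q-0)\to\mathcal{M}^{fin}(X,\mathbb{A}^q))$ with the relative $S_{\ast}$-construction of an \emph{identity} functor on the $Z^{-}$-part, and the latter is contractible for trivial reasons. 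Equivalently, the two vertical cofibers in your square agree, so the horizontal (relative) cofibers agree, even though neither vertical arrow is itself an equivalence. With this correction the remaining steps go through as you indicate; note also that applying Walker's finite-support proposition to $\mathbb{A}^q-0$ (which is not a product of affine curves) requires the Mayer--Vietoris argument for the standard cover by sets of the form $\mathbb{A}^1\times\dots\times(\mathbb{A}^1-0)\times\dots\times\mathbb{A}^1$, a point your sketch passes over.
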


\begin{proof}
Note that by the results of section 3 we have weak equivalences:

$$C_{\ast}\mathcal{K}(\mathcal{P}(\ast,
\mathbb{A}^q-0) \rightarrow \mathcal{P}(\ast, \mathbb{A}^q)) \cong
C_{\ast}\mathcal{K}(\mathcal{M}^{fin}(\ast, \mathbb{A}^q-0)
\rightarrow \mathcal{M}^{fin}(\ast, \mathbb{A}^q))
$$

and

$$C_{\ast}\mathcal{K}(\mathcal{P}^{qf, \ast \times (\mathbb{A}^q
- 0)}(\ast, \mathbb{A}^q) \rightarrow \mathcal{P}^{qf}(\ast,
\mathbb{A}^q)) \cong C_{\ast}\mathcal{K}(\mathcal{M}^{qf,\ast \times
(\mathbb{A}^q - 0)}(\ast, \mathbb{A}^q) \rightarrow
\mathcal{M}^{qf}(\ast, \mathbb{A}^q))$$

(Note that we can apply Proposition 3.10 becauuse $\mathbb{A}^q =
\mathbb{A}^1 \times \mathbb{A}^1 \times \dots \times \mathbb{A}^1$
and $\mathbb{A}^q - 0$ can be covered by open sets of the form
$\mathbb{A}^1 \times \mathbb{A}^1 \times \dots  \times
(\mathbb{A}^1-0) \times \dots \times \mathbb{A}^1$ with all
intersections also being products of $\mathbb{A}^1$ and
$\mathbb{A}^1$.)

 Let
$X$ be a henselian local scheme over $k$. The simplicial category
$$S_{\ast}(\mathcal{M}^{qf, X\times (\mathbb{A}^q - 0)}(X,
\mathbb{A}^q) \rightarrow \mathcal{M}^{qf}(X, \mathbb{A}^q))$$ has
as its $n$-objects $(n + 1)$-filtered bounded complexes
$$M^{\ast}_0 \subseteqq M^{\ast}_1 \subseteqq \dots \subseteqq
M^{\ast}_n$$ of coherent sheaves on $X\times \mathbb{A}^q$
satisfying the following conditions:

(1) $M^j$ is supported on a finite union of subschemes of $X \times
\mathbb{A}^q$ quasifinite over $X$

(2) all the factors $M^{j}_i/ M^{j}_0$ are supported on $X\times \mathbb{A}^q - 0$.

Every scheme $Z$ quasifinite over a henselian local scheme $X$ has a
canonical decomposition $Z = Z^+ \coprod Z^-$, where $Z^+$ is finite
over $X$ and the projection of $Z^-$ to $X$ does not contain the
closed point of $X$.  This gives us a direct product decomposition
of the simplicial category $$S_{\ast}(\mathcal{M}^{qf, X\times
(\mathbb{A}^q - 0)}(X, \mathbb{A}^q) \rightarrow \mathcal{M}^{qf}(X,
\mathbb{A}^q)) = \mathcal{M}^+_{\ast} \times \mathcal{M}^-_{\ast},$$

where $$\mathcal{M}^+_{\ast} =
\mathcal{S}_{\ast}(\mathcal{M}^{fin}(X, \mathbb{A}^q-0) \rightarrow
\mathcal{M}^{fin}(X, \mathbb{A}^q))$$

and $\mathcal{M}^-_{n}$ has as its objects $(n + 1)$-filtered
bounded complexes
$$M^{\ast}_0 \subseteqq M^{\ast}_1 \subseteqq \dots \subseteqq
M^{\ast}_n$$ of coherent sheaves on $X\times \mathbb{A}^q$
satisfying the following conditions:

(1) $M^{j}_i$ is supported on a finite union of subschemes of $X
\times \mathbb{A}^q$ quasifinite over $X$ whose projection to $X$
does not contain the closed point of $X$.

(2) all the factors $M^{j}_i/ M^{j}_0$ are supported on subschemes
not intersecting $X\times 0$.

Now, $$w\mathcal{S}_{\ast}\mathcal{S}_{\ast}(\mathcal{M}^{qf,
X\times (\mathbb{A}^q - 0)}(X, \mathbb{A}^q) \rightarrow
\mathcal{M}^{qf}(X, \mathbb{A}^q)) =
w\mathcal{S}_{\ast}\mathcal{S}_{\ast}(\mathcal{M}^{fin}(X,
\mathbb{A}^q-0) \rightarrow \mathcal{M}^{fin}(X, \mathbb{A}^q))
\times w\mathcal{S}_{\ast}\mathcal{M}^-_{\ast}.$$

 Our objective is to show that
$w\mathcal{S}_{\ast}\mathcal{M}^-_{\ast}$ is contractible.

Notice that the condition (2) is void, since if $M^{j}_i/M^{j}_0$
was supported on a scheme $Z$ intersecting $X \times 0$ then the
projection of $Z$ to $X$ would contain the closed point of $X$.
Denote by $\mathcal{M}^{\sharp}$ the exact category whose objects
are complexes $M^{\ast}$ satisfying:

($\ast$)  $M^{i}$ are supported on subschemes of $X \times
\mathbb{A}^q$ that are quasifinite over $X$ and whose projections to
$X$ do not contain the closed point of $X$.

We have a trivial cofiber sequence:

$$|w\mathcal{S}_{\ast}\mathcal{M}^{\sharp}| \rightarrow
|w\mathcal{S}_{\ast}\mathcal{M}^{\sharp}| \rightarrow
|w\mathcal{S}_{\ast}\mathcal{M}^-_{\ast}|,$$ where the first map is
identity.  So the last spectrum is contractible and we showed that
the natural morphism of presheaves
$$\mathbf{K}(\mathcal{M}^{fin}(\ast, \mathbb{A}^q-0)
\rightarrow \mathcal{M}^{fin}(\ast, \mathbb{A}^q)) \rightarrow
\mathbf{K}(\mathcal{M}^{qf, X\times (\mathbb{A}^q - 0)}(\ast,
\mathbb{A}^q) \rightarrow \mathcal{M}^{qf}(\ast, \mathbb{A}^q))$$ is
a weak equivalence of presheaves.  Now, applying Lemma 2.16 we get
that
$$C_{\ast}\mathcal{K}(\mathcal{M}^{fin}(\ast, \mathbb{A}^q-0)
\rightarrow \mathcal{M}^{fin}(\ast, \mathbb{A}^q)) \rightarrow
C_{\ast}\mathcal{K}(\mathcal{M}^{qf, X\times (\mathbb{A}^q -
0)}(\ast, \mathbb{A}^q) \rightarrow \mathcal{M}^{qf}(\ast,
\mathbb{A}^q))$$ is a local weak equivalence of simplicial
presheaves.
\end{proof}
\subsection{Step 6}

Step $6$ concludes the proof of the comparison between the filtrations. This implies, by Nisnevich descent, that the associated spectral sequences, due respectively to Grayson and Friedlander-Sulin, are isomorphic. 
We now prove

\begin{proposition} The presheaf $C_{\ast}\mathcal{K}(\mathcal{P}^{qf,
\ast \times (\mathbb{A}^q - 0)}(\ast, \mathbb{A}^q)$ is weakly
contractible.
\end{proposition}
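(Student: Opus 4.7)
The plan is to apply the $\mathbb{A}^1$-homotopy criterion from Lemma 2.5 to the presheaf of spectra $\mathcal{F}(U) = \mathcal{K}(\mathcal{P}^{qf,\,U\times(\mathbb{A}^q-0)}(U,\mathbb{A}^q))$. Namely, I would construct a natural family of morphisms $H_U : \mathcal{F}(U) \to \mathcal{F}(U\times \mathbb{A}^1)$ such that $i_1^{\ast}\circ H_U = \mathrm{id}$ and $i_0^{\ast}\circ H_U$ is null-homotopic via a homotopy natural in $U$.

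To define $H_U$, consider the scaling map
\[
\mu : U\times \mathbb{A}^1 \times \mathbb{A}^q \longrightarrow U\times \mathbb{A}^q, \qquad (u,t,y)\longmapsto (u, ty).
\]
I claim that pullback $\mu^{\ast}$ defines an exact functor $\mathcal{P}^{qf,\,U\times(\mathbb{A}^q-0)}(U,\mathbb{A}^q) \to \mathcal{P}^{qf,\,(U\times\mathbb{A}^1)\times(\mathbb{A}^q-0)}(U\times\mathbb{A}^1,\mathbb{A}^q)$. The cohomology of $\mu^{\ast}E^{\ast}$ is supported in $\mu^{-1}(\mathrm{supp}\, H^{\ast}(E^{\ast}))$; if $(u,ty)\in U\times(\mathbb{A}^q-0)$ then $y\neq 0$, so the preimage lies in $(U\times \mathbb{A}^1)\times(\mathbb{A}^q-0)$. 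Quasifiniteness over $U\times\mathbb{A}^1$ is immediate away from $t=0$ (where the fiber is just a rescaling of the original fiber), and at $t=0$ the preimage is empty, because the condition $\mathrm{supp}\subset U\times(\mathbb{A}^q-0)$ forbids any point $(u,0)$ from belonging to the support. Define $H_U$ to be the induced map on $K$-theory spectra. Naturality in $U$ follows at once from the compatibility of pullback with the morphism $V\times \mathbb{A}^1\times \mathbb{A}^q \to U\times \mathbb{A}^1\times \mathbb{A}^q$.

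Now I verify the two remaining conditions of Lemma 2.5. Since $\mu\circ i_1 = \mathrm{id}$ on $U\times\mathbb{A}^q$, one has $i_1^{\ast}\circ H_U = \mathrm{id}_{\mathcal{F}(U)}$ on the nose. For the harder condition, observe that $\mu\circ i_0$ is the composition $U\times\mathbb{A}^q \xrightarrow{\pi_U} U \hookrightarrow U\times\{0\}\hookrightarrow U\times\mathbb{A}^q$, so $i_0^{\ast}\mu^{\ast}E^{\ast} = \pi_U^{\ast}\bigl(E^{\ast}\big|_{U\times\{0\}}\bigr)$. Because the support of $E^{\ast}$ avoids $U\times\{0\}$, the restriction $E^{\ast}|_{U\times\{0\}}$ is an acyclic complex of vector bundles on $U$, and so is its pullback along $\pi_U$. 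Hence the natural inclusion $0 \hookrightarrow i_0^{\ast}\mu^{\ast}E^{\ast}$ is a natural weak equivalence of exact functors $\mathcal{P}^{qf,\,U\times(\mathbb{A}^q-0)}(U,\mathbb{A}^q)\to \mathcal{P}^{qf,\,U\times(\mathbb{A}^q-0)}(U,\mathbb{A}^q)$, and in Waldhausen $K$-theory a natural weak equivalence between exact functors induces a homotopy between the associated maps of $K$-theory spectra; this homotopy is manifestly natural in $U$. Thus $i_0^{\ast}\circ H_U$ is homotopic to the constant map to the distinguished point.

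With all three hypotheses of Lemma 2.5 verified, the conclusion that $C_{\ast}\mathcal{K}(\mathcal{P}^{qf,\,\ast\times(\mathbb{A}^q-0)}(\ast,\mathbb{A}^q))$ is weakly contractible is immediate. The only genuine subtlety is tracking the support condition under $\mu^{\ast}$, in particular ensuring quasifiniteness of the pullback support over $U\times\mathbb{A}^1$ at $t=0$; this is precisely where the hypothesis that the original support misses the zero section is used, and it is what makes the scaling homotopy work in this category rather than in the larger $\mathcal{P}^{qf}(\ast,\mathbb{A}^q)$.
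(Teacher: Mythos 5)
Your proposal is correct and is exactly the paper's argument: the paper also applies Lemma 2.5 with the scaling map $(x,v,t)\mapsto(x,tv)$ and the induced pullback functor into $\mathcal{P}^{qf,\,X\times\mathbb{A}^1\times(\mathbb{A}^q-0)}(X\times\mathbb{A}^1,\mathbb{A}^q)$, merely asserting that the required properties are ``straightforward to check.'' Your write-up supplies the verifications (quasifiniteness of the pulled-back support at $t=0$, $\mu\circ i_1=\mathrm{id}$, and acyclicity of $i_0^{\ast}\mu^{\ast}E^{\ast}$ giving a natural null-homotopy) that the paper omits.
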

\begin{proof}

We establish the proposition by producing an explicit homotopy.  Precisely, by Lemma 2.5, we
need to construct a morphism of presheaves $$H:
\mathcal{K}(\mathcal{P}^{qf, \ast \times (\mathbb{A}^q - 0)}(\ast,
\mathbb{A}^q) \rightarrow C_1\mathcal{K}(\mathcal{P}^{qf, \ast
\times (\mathbb{A}^q - 0)}(\ast, \mathbb{A}^q)$$ such that
$i_0^{\ast}\circ H$ is homotopic to constant and $i_1^{\ast}\circ H$
is identity.  Let $X$ be a scheme.  Consider the map:  $$\theta: X
\times \mathbb{A}^q \times \mathbb{A}^1 \rightarrow X \times
\mathbb{A}^q$$ given by multiplication $$(x, v, t) \rightarrow (x,
tv).$$ It induces an exact functor

$$\theta^{\ast}: \mathcal{P}^{qf,X \times (\mathbb{A}^q-0)}(X,
\mathbb{A}^q) \rightarrow \mathcal{P}^{qf, X \times \mathbb{A}^1
\times (\mathbb{A}^q-0)}(X \times \mathbb{A}^1, \mathbb{A}^q)$$ that
in turn induces a map on $K$-theory spectra.  It is a
straightforward to check that it satisfies the required properties.
\end{proof}

% Alternatively, you may write the chapters in separate
% files, say chap1.tex, chap2.tex, etc., and include them 
% with commands:

%\include{chap1}

%\include{chap2}

% The command \includeonly above allows to include a selected 
% set of chapters only.

% Uncomment the 'singlespace' environment and '\bibsep' command
% if needed - some bibliographic styles overide the definition
% of 'thebibliography' in nuthesis.cls
%
%\begin{singlespace}
%\bibsep 12pt


\begin{thebibliography}{CK}





\bibitem[BF]{BF} A. K. Bousfield, E. M. Friedlander, Homotopy theory of $\Gamma$-spaces, spectra, and bisimplicial
sets, Geometric applications of homotopy theory (Proc. Conf., Evanston, Ill., 1977), II, pp.
80�130, Lecture Notes in Math., 658, Springer, Berlin, 1978.


\bibitem[BL]{BL} S. Lichtenbaum and S. Bloch, A spectral sequence for motivic cohomology, http://www.math.uiuc.edu/K-theory/0062/
\bibitem[FrVo]{FrVo} E. Friedlander and V. Voevodsky, "Bivariant cycle cohomology", Cycles, Transfers and
Motivic Homology Theories (V. Voevodsky, A. Suslin and E.
Friedlander, eds.), Annals of Math. Studies, 1999.
\bibitem[FS]{FS} E. Friedlander and A. Suslin, The spectral sequence relating algebraic K-theory to motivic cohomology, {\tt http://www.math.uiuc.edu/K-theory/0432/paper.pdf}
\bibitem[Gr95]{Gr95} D. Grayson, Weight Filtrations via Commuting Automorphisms, K-Theory 9 (1995), 139-
172.
\bibitem[Jar87]{Jar87} J.F. Jardine, Simplicial presheaves, J. Pure Appl. Algebra 47 (1987), no. 1, 35-87. MR
MR906403 (88j:18005)



\bibitem[Jar97]{Jar97} J.F. Jardine, "Generalized Etale Cohomology Theories", Progress in Mathematics Vol. 146, Birkh�user, Basel-Boston-Berlin (1997).

\bibitem[Jar09]{Jar09} J.F. Jardine, "The K-theory presheaf of spectra", New topological contexts for Galois theory and algebraic geometry (BIRS 2008), Geometry and Topology Monographs 16 (2009), 151-178.

\bibitem[MVW]{MVW} C. Mazza, V. Voevodsky, C. Weibel, Lecture notes on motivic cohomology, American Mathematical Soc., 2006

\bibitem[Qu73]{Qu73} D. Quillen, Higher Algebraic K-theory, Proceedings of the International Congress
of Mathematicians, Vancouver 1974, vol. I, Canadian Mathematical Congress, 1975,
pp. 171-176.

\bibitem[Su01]{Su01} A. Suslin, On the Grayson spectral sequence, Tr. Mat. Inst. Steklova, 2003, Volume 241, Pages 218�253 


\bibitem[SV96]{SV96} A. Suslin and V.Voevodsky, Singular Homology of Abstract Algebraic Varieties, Invent.
Math. 123 (1996), 61-94.

\bibitem[SV98]{SV98} A. Suslin and V.Voevodsky, Bloch-Kato Conjecture and Motivic Cohomology with Finite
Coefficients, NATO Sci. Ser. C Math. Phys. Sci., vol. 548, 1998, pp.
117-189.

\bibitem[TT88]{TT88} R.W. Thomason and T. Trobaugh, Higher algebraic K-theory of schemes and of derived
categories, The Grothendieck Fetschrift, vol. III, Progr. Math., 88,
pp. 247-435.

\bibitem[Vo99]{Vo99} V.Voevodsky, Cohomological Theory of Presheaves with Transfers, Cycles, Transfers and
Motivic Homology Theories (V. Voevodsky, A. Suslin and E.
Friedlander, eds.), Annals of Math Studies, 1999.

\bibitem[Wa85]{Wa85} F. Waldhausen, Algebraic K-theory of spaces, Lecture Notes in Math 1126 (1985),
318-419.
\bibitem[Wa97]{Wa97} M.Walker, Motivic Complexes and the K-Theory of Automorphisms 0205 (1997).

\end{thebibliography}
\end{document}